\newtheorem{thm}[subsection]{Theorem}
\newtheorem{lemma}[subsection]{Lemma}
\newtheorem{remark}[subsection]{Remark}
\theoremstyle{definition}
\newtheorem{example}[subsection]{Example}
\numberwithin{equation}{section}
\def\cO{{\cal O}}
\def\cA{{\cal A}}
\def\ra{\rightarrow}
\def\bra{\langle}
\def\ket{\rangle}
\def\cA{{\mathcal A}}
\def\cB{{\mathcal B}}
\def\cC{{\mathcal C}}
\def\cE{{\mathcal E}}
\def\cF{{\mathcal F}}
\def\cH{{\mathcal H}}
\def\cO{{\mathcal O}}
\def\cS{{\mathcal S}}
\def\cV{{\mathcal V}}
\def\cW{{\mathcal W}}
\def\gg{{\mathfrak g}}
\def\gl{{\mathfrak l}}
\def\go{{\mathfrak o}}
\def\gp{{\mathfrak p}}
\def\gs{{\mathfrak s}}
\newfont{\german}{eufm10}
\begin{document}
\pagestyle{plain}

\title
{A commutant realization of Odake's algebra}

\author{Thomas Creutzig and Andrew R. Linshaw}

\address{Department of Mathematics, University of Alberta}
\email{creutzig@ualberta.ca}

\address{Department of Mathematics, University of Denver}
\email{andrew.linshaw@du.edu}
\thanks{We thank R. Heluani for interesting discussions and for suggesting that Odake's algebra should appear inside the vertex algebra $\text{Com}(V_0(\gs\gl_2), \cW)$.}

%\date{\today}

{\abstract

\noindent
The $bc\beta\gamma$-system $\cW$ of rank $3$ has an action of the affine vertex algebra $V_0(\gs\gl_2)$, and the commutant vertex algebra $\cC = \text{Com}(V_0(\gs\gl_2), \cW)$ contains copies of $V_{-3/2}(\gs\gl_2)$ and Odake's algebra $\cO$. Odake's algebra is an extension of the $N=2$ superconformal algebra with $c=9$, and is generated by eight fields which close nonlinearly under operator product expansions. Our main result is that $V_{-3/2}(\gs\gl_2)$ and $\cO$ form a Howe pair (i.e., a pair of mutual commutants) inside $\cC$. More generally, any finite-dimensional representation of a Lie algebra $\gg$ gives rise to a similar Howe pair, and this example corresponds to the adjoint representation of $\gs\gl_2$.}

\keywords{vertex algebra; commutant; extended superconformal algebra; Howe pair}
\maketitle

%\tableofcontents

\section{Introduction}

\noindent

Let $\cV$ be a vertex algebra, and let $\cA$ be a subalgebra of $\cV$. The {\it commutant} of $\cA$ in $\cV$, denoted by $\text{Com}(\cA,\cV)$, is the subalgebra consisting of all elements $v\in\cV$ such that $ [a(z),v(w)] = 0$ for all $a\in \cA$. This construction was introduced by Frenkel-Zhu in \cite{FZ}, generalizing earlier constructions in representation theory \cite{KP} and physics \cite{GKO}, and is important in the construction of coset conformal field theories. Many interesting vertex algebras have commutant realizations. For example, the Zamolodchikov $\cW_3$-algebra with central charge $c=-2$ can be realized as the commutant of the Heisenberg algebra inside the $\beta\gamma$-system \cite{Wa}. The Feigin-Semikhatov algebra $\cW^{(2)}_n$ at critical level can be realized as the commutant of $V_0(\gp\gs\gl(n|n))$ inside the $bc\beta\gamma$-system of rank $n^2$ for $n\leq 4$, and conjecturally for all $n$ \cite{CGL}. The orbifold vertex algebra $M(1)^+$ can be realized as the commutant of  the affine vertex algebra $L_{C^{(1)}_l} (- \Lambda_0)$ inside the tensor product of two copies of $L_{C^{(1)}_l} (-\frac{1}{2} \Lambda_0)$ \cite{AP}. Here $M(1)^+$ denotes the $\mathbb{Z}/2\mathbb{Z}$-invariant subalgebra of the rank $l$ Heisenberg algebra, which is an important building block of the orbifold vertex algebra $V_L^+$ for any lattice $L$ of rank $l$. In general, commutants are difficult to study and there are very few examples where an exhaustive description can be given in terms of generators, operator product expansions, and normally ordered polynomial relations among the generators. It is also natural to study the double commutant $\text{Com}(\text{Com}(\cA,\cV),\cV)$, which always contains $\cA$. If $\cA = \text{Com}(\text{Com}(\cA,\cV),\cV)$, we say that $\cA$ and $\text{Com}(\cA,\cV)$ form a {\it Howe pair} inside $\cV$.

We begin with a simple, finite-dimensional Lie algebra $\gg$ and a finite-dimensional linear representation $V$ of $\gg$, via $\rho:\gg\rightarrow \text{End}(V)$. Associated to $\rho$ is the bilinear form $B(\xi,\eta) = \text{tr}_V(\rho(\xi) \rho(\eta))$ on $\gg$. There is a natural action of the affine vertex algebra $V_1(\gg,B)$ on the $bc$-system $\cE  = \cE(V)$ associated to $V$, and an analogous action of $V_{-1}(\gg,B)$ on the $\beta\gamma$-system $\cS = \cS(V)$. These combine to give an action of the level zero affine vertex algebra $V_0(\gg)$ on the $bc\beta\gamma$-system $\cW = \cE\otimes \cS$. Let $\Theta_{\cE}$, $\Theta_{\cS}$, and $\Theta_{\cW}$ denote the images of $V_1(\gg,B)$, $V_{-1}(\gg,B)$, and $V_0(\gg)$ inside $\cE$, $\cS$, and $\cW$, respectively. The commutants $\text{Com}(\Theta_{\cE},\cE)$, $\text{Com}(\Theta_{\cS},\cS)$, and $\text{Com}(\Theta_{\cW},\cW)$, coincide with the invariant spaces $\cE^{\gg[t]}$, $\cS^{\gg[t]}$, and $\cW^{\gg[t]}$, respectively. Since $\cS^{\gg[t]}\subset\cW^{\gg[t]}$, we may define \begin{equation*} \cC(\gg,V) = \text{Com}(\cS^{\gg[t]}, \cW^{\gg[t]}).\end{equation*} Clearly $\cE^{\gg[t]} \subset \cC(\gg,V)$, but in general $\cC(\gg,V)$ is larger than $\cE^{\gg[t]}$ and contains fields that depend on the generators $\beta^x, \gamma^{x'}$ of $\cS$. We establish some basic properties of $\cC(\gg,V)$, and in particular we show that $\cS^{\gg[t]}$ and $\cC(\gg,V)$ always form a Howe pair inside $\cW^{\gg[t]}$.

In the case where $\gg = \gs\gl_2$ and $V$ is the adjoint module $\mathbb{C}^3$, $\cS^{\gs\gl_2[t]}$ and $\cE^{\gs\gl_2[t]}$ are isomorphic to $V_{-3/2}(\gs\gl_2)$ and a rank-one lattice vertex algebra, respectively. Our main result is that $\cC(\gs\gl_2,\mathbb{C}^3)$ is isomorphic to Odake's algebra $\cO$. This algebra is an extension of the $N=2$ superconformal algebra with central charge $c=9$, and is strongly generated by a $U(1)$ current $F$, a Virasoro element $L$, four primary fields $G, \bar{G},X, \bar{X}$ of weight $\frac{3}{2}$, and two primary fields $Y, \bar{Y}$ of weight $2$. The algebra $\cO$ appeared first in the context of superstring theory. The target space of a ten-dimensional superstring is $\mathbb R^4\times X$, where $X$ is a three-dimensional Calabi-Yau manifold. The sigma model of such a Calabi-Yau manifold has $N=(2,2)$ superconformal symmetry, and due to Ricci-flatness an additional symmetry associated to the holomorphic $(3,0)$ and anti-holomorphic $(0,3)$ forms. The main result of \cite{OI} is that the underlying conformal field theory is two copies of $\cO$. On a similar note, Malikov-Schechtman-Vaintrob have associated to any smooth manifold a sheaf of vertex algebra known as the {\it chiral de Rham sheaf} \cite{MSV}, and it was recently shown in \cite{EHKZ} that the algebra of global sections $\Omega^{ch}(X)$ contains two commuting copies of $\cO$. 

It is interesting to study the vertex algebras $\cS^{\gg[t]}$, $\cE^{\gg[t]}$, $\cW^{\gg[t]}$, and $\cC(\gg,V)$ for any $\gg$ and $V$, and it appears that they have nice structures in general. In the last section, we identify these algebras in the case where $\gg = \gs\gl_2$ and $V=\mathbb{C}^2$. We will see that $\cS^{\gs\gl_2[t]}$, $\cE^{\gs\gl_2[t]}$, and $\cW^{\gs\gl_2[t]}$ are isomorphic to the Heisenberg algebra, the irreducible affine vertex algebra $L_1(\gs\gl_2)$, and a homomorphic image of $V_1(\gs\gl(2|1))$, respectively. Finally, $\cC(\gs\gl_2,\mathbb{C}^2)$ is isomorphic to $L_1(\gs\gl_2) \otimes \cW_{3,-2}$, where $\cW_{3,-2}$ is the Zamolodchikov $\cW_3$-algebra with $c=-2$.

\section{Vertex algebras}
In this section, we define vertex algebras, which have been discussed from various different points of view in the literature (see for example \cite{B}\cite{FLM}\cite{K}\cite{FBZ}). We will follow the formalism developed in \cite{LZ} and partly in \cite{LiI}. Let $V=V_0\oplus V_1$ be a super vector space over $\mathbb{C}$, and let $z,w$ be formal variables. By $\text{QO}(V)$, we mean the space of all linear maps $$V\rightarrow V((z)):=\{\sum_{n\in\mathbb{Z}} v(n) z^{-n-1}|
v(n)\in V,\ v(n)=0\ \text{for} \ n >\!\!> 0 \}.$$ Each element $a\in \text{QO}(V)$ can be
uniquely represented as a power series
$$a=a(z):=\sum_{n\in\mathbb{Z}}a(n)z^{-n-1}\in \text{End}(V)[[z,z^{-1}]].$$ We
refer to $a(n)$ as the $n$th Fourier mode of $a(z)$. Each $a\in
\text{QO}(V)$ is assumed to be of the shape $a=a_0+a_1$ where $a_i:V_j\ra V_{i+j}((z))$ for $i,j\in\mathbb{Z}/2\mathbb{Z}$, and we write $|a_i| = i$.

On $\text{QO}(V)$ there is a set of nonassociative bilinear operations
$\circ_n$, indexed by $n\in\mathbb{Z}$, which we call the $n$th circle
products. For homogeneous $a,b\in \text{QO}(V)$, they are defined by
$$
a(w)\circ_n b(w)=\text{Res}_z a(z)b(w)~\iota_{|z|>|w|}(z-w)^n-
(-1)^{|a||b|}\text{Res}_z b(w)a(z)~\iota_{|w|>|z|}(z-w)^n.
$$
Here $\iota_{|z|>|w|}f(z,w)\in\mathbb{C}[[z,z^{-1},w,w^{-1}]]$ denotes the
power series expansion of a rational function $f$ in the region
$|z|>|w|$. We usually omit the symbol $\iota_{|z|>|w|}$ and just
write $(z-w)^{-1}$ to mean the expansion in the region $|z|>|w|$,
and write $-(w-z)^{-1}$ to mean the expansion in $|w|>|z|$. It is
easy to check that $a(w)\circ_n b(w)$ above is a well-defined
element of $\text{QO}(V)$.

The nonnegative circle products are connected through the {\it
operator product expansion} formula.
For $a,b\in \text{QO}(V)$, we have \begin{equation} \label{opeformula} a(z)b(w)=\sum_{n\geq 0}a(w)\circ_n
b(w)~(z-w)^{-n-1}+:a(z)b(w):,\end{equation} which is often written as
$a(z)b(w)\sim\sum_{n\geq 0}a(w)\circ_n b(w)~(z-w)^{-n-1}$, where
$\sim$ means equal modulo the term $$
:a(z)b(w): \ =a(z)_-b(w)\ +\ (-1)^{|a||b|} b(w)a(z)_+.$$ Here
$a(z)_-=\sum_{n<0}a(n)z^{-n-1}$ and $a(z)_+=\sum_{n\geq
0}a(n)z^{-n-1}$. Note that $:a(w)b(w):$ is a well-defined element of
$\text{QO}(V)$. It is called the {\it Wick product} of $a$ and $b$, and it
coincides with $a\circ_{-1}b$. The other negative circle products
are related to this by
$$ n!~a(z)\circ_{-n-1}b(z)=\ :(\partial^n a(z))b(z):\ ,$$
where $\partial$ denotes the formal differentiation operator
$\frac{d}{dz}$. For $a_1(z),\dots ,a_k(z)\in \text{QO}(V)$, the $k$-fold
iterated Wick product is defined to be
$$ :a_1(z)a_2(z)\cdots a_k(z):\ =\ :a_1(z)b(z):,$$
where $b(z)=\ :a_2(z)\cdots a_k(z):$. We often omit the formal variable $z$ when no confusion can arise.

The set $\text{QO}(V)$ is a nonassociative algebra with the operations
$\circ_n$ and a unit $1$. We have $1\circ_n a=\delta_{n,-1}a$ for
all $n$, and $a\circ_n 1=\delta_{n,-1}a$ for $n\geq -1$. A linear subspace $\cA\subset \text{QO}(V)$ containing 1 which is closed under the circle products will be called a {\it quantum operator algebra} (QOA).
In particular $\cA$ is closed under $\partial$
since $\partial a=a\circ_{-2}1$. Many formal algebraic
notions are immediately clear: a homomorphism is just a linear
map that sends $1$ to $1$ and preserves all circle products; a module over $\cA$ is a
vector space $M$ equipped with a homomorphism $\cA\rightarrow
\text{QO}(M)$, etc. A subset $S=\{a_i|\ i\in I\}$ of $\cA$ is said to generate $\cA$ if any element $a\in\cA$ can be written as a linear
combination of nonassociative words in the letters $a_i$, $\circ_n$, for
$i\in I$ and $n\in\mathbb{Z}$. We say that $S$ {\it strongly generates} $\cA$ if any $a\in\cA$ can be written as a linear combination of words in the letters $a_i$, $\circ_n$ for $n<0$. Equivalently, $\cA$ is spanned by the collection $\{ :\partial^{k_1} a_{i_1}(z)\cdots \partial^{k_m} a_{i_m}(z):| ~i_1,\dots,i_m \in I,~ k_1,\dots,k_m \geq 0\}$.

We say that $a,b\in \text{QO}(V)$ {\it quantum commute} if $(z-w)^N
[a(z),b(w)]=0$ for some $N\geq 0$. Here $[,]$ denotes the super bracket. This condition implies that $a\circ_n b = 0$ for $n\geq N$, so (\ref{opeformula}) becomes a finite sum. A {\it commutative quantum operator algebra} (CQOA) is a QOA whose elements pairwise quantum commute. Finally, the notion of a CQOA is equivalent to the notion of a vertex algebra. Every CQOA $\cA$ is itself a faithful $\cA$-module, called the {\it left regular
module}. Define
$$\rho:\cA\rightarrow \text{QO}(\cA),\ \ \ \ a\mapsto\hat a,\ \ \ \ \hat
a(\zeta)b=\sum_{n\in\mathbb{Z}} (a\circ_n b)~\zeta^{-n-1}.$$ Then $\rho$ is an injective QOA homomorphism,
and the quadruple of structures $(\cA,\rho,1,\partial)$ is a vertex
algebra in the sense of \cite{FLM}. Conversely, if $(V,Y,{\bf 1},D)$ is
a vertex algebra, the collection $Y(V)\subset \text{QO}(V)$ is a
CQOA. {\it We will refer to a CQOA simply as a
vertex algebra throughout the rest of this paper}.

\begin{example}[Affine vertex algebras] Let $\gg$ be a finite-dimensional, complex Lie (super)algebra, equipped with a symmetric, invariant bilinear form $B$. The loop algebra $\gg[t,t^{-1}] = \gg\otimes \mathbb{C}[t,t^{-1}]$ has a one-dimensional central extension $\hat{\gg} = \gg[t,t^{-1}]\oplus \mathbb{C}\kappa$ determined by $B$, with bracket $$[\xi t^n, \eta t^m] = [\xi,\eta] t^{n+m} + n B(\xi,\eta) \delta_{n+m,0} \kappa,$$ and $\mathbb{Z}$-gradation $\text{deg}(\xi t^n) = n$, $\text{deg}(\kappa) = 0$. Let $\hat{\gg}_{\geq 0} = \bigoplus_{n\geq 0} \hat{\gg}_n$ where $\hat{\gg}_n$ denotes the subspace of degree $n$, and let $C$ be the one-dimensional $\hat{\gg}_{\geq 0}$-module on which $\xi t^n$ acts trivially for $n\geq 0$, and $\kappa$ acts by $k$ times the identity. Define $V = U(\hat{\gg})\otimes_{U(\hat{\gg}_{\geq 0})} C$, and let $X^{\xi}(n)\in \text{End}(V)$ be the linear operator representing $\xi t^n$ on $V$. Define $X^{\xi} (z) = \sum_{n\in\mathbb{Z}} X^{\xi} (n) z^{-n-1}$, which is easily seen to lie in $\text{QO}(V)$ and satisfy the operator product relation $$X^{\xi}(z)X^{\eta} (w)\sim kB(\xi,\eta) (z-w)^{-2} + X^{[\xi,\eta]}(w) (z-w)^{-1} .$$ The vertex algebra $V_k(\gg,B)$ generated by $\{X^{\xi}| \ \xi \in\gg\}$ is known as the {\it universal affine vertex algebra} associated to $\gg$ and $B$ at level $k$. If $\gg$ is a simple, finite-dimensional Lie algebra, we will always take $B$ to be the normalized Killing form $\frac{1}{2 h^{\vee}} \bra, \ket_K$, and we use the notation $V_k(\gg)$. 

We recall the \emph{Sugawara construction} for affine vertex (super)algebras following \cite{KRW}.
Suppose that $\gg$ is simple and that the bilinear form $B$ is nondegenerate. Let $\{\xi\}$ and $\{\xi'\}$ be dual bases of $\gg$, i.e., $B(\xi',\eta)=\delta_{\xi,\eta}$.
Then the Casimir operator is $C_2=\sum_{\xi}\xi\xi'$.
The dual Coxeter number $h^\vee$ with respect to the bilinear form $B$ is one-half the eigenvalue of $C_2$ in the adjoint representation of $\gg$.
If $k+h^\vee\neq0$, there is a Virasoro field
\begin{equation*}
L_{\text{Sug}} = \frac{1}{2(k+h^\vee)}\sum_\xi :X^{\xi}X^{\xi'}:
\end{equation*}
of central charge $c= \frac{k\text{sdim}\gg}{k+h^\vee}$, which is known as the {\it Sugawara conformal vector}. 
\end{example}
 
\begin{example}[$\beta\gamma$ and $bc$ systems] Let $V$ be a finite-dimensional complex vector space. The $\beta\gamma$ system or algebra of chiral differential operators $\cS = \cS(V)$ was introduced in \cite{FMS}. It is the unique vertex algebra with even generators $\beta^{x}$, $\gamma^{x'}$ for $x\in V$, $x'\in V^*$, which satisfy
\begin{equation*}
\begin{split}
\beta^x(z)\gamma^{x'}(w)&\sim\langle x',x\rangle (z-w)^{-1},\ \ \ \ \ \ \gamma^{x'}(z)\beta^x(w)\sim -\langle x',x\rangle (z-w)^{-1},\\
\beta^x(z)\beta^y(w)&\sim 0,\qquad\qquad\qquad\ \ \ \ \ \gamma^{x'}(z)\gamma^{y'}(w)\sim 0.
\end{split}
\end{equation*} Here $\bra,\ket$ denotes the natural pairing between $V^*$ and $V$. We give $\cS$ the conformal structure \begin{equation} \label{virasorobg} L_{\cS} = \sum_{i=1}^n :\beta^{x_i}\partial\gamma^{x'_i}:,\end{equation} of central charge $c=2n$, under which $\beta^{x_i}$, $\gamma^{x'_i}$ are primary of weights $1$, $0$, respectively. Here $\{x_1,\dots,x_n\}$ is a basis for $V$ and $\{x'_1,\dots,x'_n\}$ is the dual basis for $V^*$. 

Similarly, the $bc$ system $\cE = \cE(V)$, which was also introduced in \cite{FMS}, is the unique vertex superalgebra with odd generators $b^{x}$, $c^{x'}$ for $x\in V$, $x'\in V^*$, which satisfy
\begin{equation*}
\begin{split}
b^x(z)c^{x'}(w)&\sim\langle x',x\rangle (z-w)^{-1},\ \ \ \ \ \ c^{x'}(z)b^x(w)\sim \langle x',x\rangle (z-w)^{-1},\\
b^x(z)b^y(w)&\sim 0,\qquad\qquad\qquad\ \ \ \ \ c^{x'}(z)c^{y'}(w)\sim 0.
\end{split}
\end{equation*}
 We give $\cE$ the conformal structure \begin{equation*} \label{virasorobc} L_{\cE} = -\sum_{i=1}^n :b^{x_i}\partial c^{x'_i}:,\end{equation*} of central charge $c=-2n$, under which $b^{x_i}$, $c^{x'_i}$ are primary of conformal weights $1$, $0$, respectively. 

Let $\cW = \cE\otimes \cS$, equipped with the combined Virasoro element 
\begin{equation*} L_{\cW} = L_{\cE} + L_{\cS}\end{equation*} 
of central charge $c=0$.
\end{example}

\begin{example}[Odake's algebra] 
Consider a rank 6 Heisenberg vertex algebra with generators $\alpha^\pm_i$, $i=1,2,3$ and non-regular operator products 
$$ \alpha^+_i(z)\alpha^-_i(w)\sim (z-w)^{-2},$$
tensored with a rank three $bc$-system with 
generators $b_i,c_i$, $i=1,2,3$ and non-regular operator products 
$$ b_i(z)c_i(w)\sim (z-w)^{-1}.$$
In \cite{OI}, Odake defines a vertex superalgebra $\cO$ 
as the subalgebra generated by the fields $G,\bar G, X, \bar X$ given by
\begin{equation*}
\begin{split}
G&= \sum_{i=1}^3:b_i\alpha^+_i:,\qquad \bar G= \sum_{i=1}^3:c_i\alpha^-_i:,\qquad X=\ :b_1b_2b_3:,\qquad \bar X=\ :c_1c_2c_3:.
\end{split}
\end{equation*}
Define additional fields \begin{equation*} \label{additionalfields}Y = \frac{1}{2} \bar{G} \circ_1 X,\qquad \bar{Y} = \frac{1}{2} G\circ_1 \bar{X},\qquad F = G\circ_2 \bar{G},\qquad L = G\circ_1 \bar{G} - \frac{1}{2} \partial F.\end{equation*}
It turns out that $\cO$ is {\it strongly} generated by the eight fields $F, L, G, \bar{G}, X, \bar{X}, Y, \bar{Y}$. Moreover, $L$ is  a Virasoro element of central charge $9$, $F$ is primary of weight one, $G, \bar{G}, X, \bar{X}$ are primary of weight 
$\frac{3}{2}$, and $Y, \bar{Y}$ are primary of weight $2$ \cite{OI}. The fields $F,G,\bar G, L$  generate a copy of the $N=2$ superconformal algebra with central charge $c=9$:
\begin{equation}\label{n=2svir}
\begin{split}
F(z) F(w) &\sim 3 (z-w)^{-2},\qquad G(z) G(w)\sim 0,\qquad  \bar{G}(z)  \bar{G}(w) \sim 0,\\ 
 F(z) G(w)&\sim G(w) (z-w)^{-1},\qquad F(z) \bar{G}(w) \sim -\bar{G}(w)(z-w)^{-1},\\ 
 G(z) \bar{G}(w)  &\sim 3(z-w)^{-3} + F(w) (z-w)^{-2} + (L(w) + \frac{1}{2} \partial F(w) )(z-w)^{-1}.
\end{split}
\end{equation}
The fields $F,X,\bar{X}$ satisfy
\begin{equation*}
\begin{split} 
F(z) X(w) &\sim 3 X(w) (z-w)^{-1},\ \ \ \ \ \ F(z) \bar{X}(w) \sim -3 \bar{X}(w) (z-w)^{-1},\\ 
X(z) \bar{X}(w) &\sim -(z-w)^{-3} -F(w)(z-w)^{-2} -\frac{1}{2} \bigg(:F(w)F(w): + \partial F(w) \bigg)(z-w)^{-1}.
\end{split}
\end{equation*}
The additional operator product relations of $F,G,\bar G$ with $X,\bar X, Y, \bar Y$ are
\begin{equation*}
\begin{split}
F(z) Y(w) &\sim 2 Y(w) (z-w)^{-1},\qquad F(z) \bar{Y}(w) \sim -2 \bar{Y}(w) (z-w)^{-1},\\ 
G(z) X(w) &\sim 0,\qquad \bar{G}(z) X(w) \sim 2 Y(w) (z-w)^{-1},\\
\bar{G}(z) \bar{X}(w) &\sim 0, \qquad G(z) \bar{X}(w) \sim 2 \bar{Y}(w)(z-w)^{-1},\\
 G(z) Y(w)&\sim \frac{3}{2} X(w)(z-w)^{-2}+ \frac{1}{2} \partial X(w) (z-w)^{-1},\qquad G(z) \bar{Y}(w)\sim 0,\\
\bar{G}(z) \bar{Y}(w)&\sim \frac{3}{2} \bar{X}(w)(z-w)^{-2}+ \frac{1}{2} \partial \bar{X}(w) (z-w)^{-1}, \qquad \bar G(z) Y(w)\sim 0.
\end{split}
\end{equation*}
The remaining operator product relations of $X,\bar X,Y,\bar Y$ are
\begin{equation*}
\begin{split}
Y(z) &\bar{Y}(w) \sim -\frac{3}{4} (z-w)^{-4} -\frac{1}{2} F(w) (z-w)^{-3} -\\
&\frac{1}{4} \bigg( L(w) + \partial F(w) + \frac{1}{2} :F(w)F(w):\bigg) (z-w)^{-2}+\\ 
&\frac{1}{4} \bigg(:G(w) \bar{G}(w): -:L(w) F(w): - \partial L(w) -\frac{1}{4} \partial \big(:F(w) F(w):\big)\bigg) (z-w)^{-1},\\
X(z) &\bar{Y}(w) \sim -\frac{1}{2} G(w)(z-w)^{-2} - \frac{1}{2} \bigg(:G(w)F(w): + \partial G(w) \bigg) (z-w)^{-1},\\
\bar{X}(z) &Y(w) \sim -\frac{1}{2} \bar{G}(w)(z-w)^{-2} - \frac{1}{2} \bigg(-:\bar{G}(w)F(w): + \partial \bar{G}(w) \bigg) (z-w)^{-1},\\
X(z) &Y(w) \sim 0,\ \ \ \ \ \ \ \bar{X}(z) \bar{Y}(w)\sim 0.\\
\end{split}
\end{equation*} Finally, the following normally ordered polynomial relations hold in $\cO$:
\begin{equation} \label{odakerelations} \partial X =\ :FX:,\ \ \ \ \ \ \partial \bar{X} = - :F\bar{X}:,\ \ \ \ \ \ :YY:\ = 0,\ \ \ \ \ \ :\bar{Y} \bar{Y}:\ = 0.\end{equation}

Odake uses the physics notation for the operators of a field, that is $A(n)=A_{n-h_A+1}$ where $h_A$ is the conformal dimension of the field $A(z)$. 
Clearly $\cO$ is a highest-weight module over the Lie algebra generated by the modes $\{A_{n}|\ n\in \mathbb{Z},\ A\in\{F,L,G,\bar{G},X,\bar{X},Y,\bar{Y}\}$, with highest-weight vector $|0\rangle$ satisfying
\begin{equation*}
 A_n|0\rangle=0,\qquad \text{if}\ n>0\ \text{for all}\ A\in\{F,L,G,\bar{G},X,\bar{X},Y,\bar{Y}\}.
\end{equation*}
Similar, we call $H_{h,m}$ a highest-weight module of the $N=2$ superconformal
algebra with $c=9$ if it contains a highest-weight vector $|h,m\rangle$ satisfying
\begin{equation*}
\begin{split}
L_0|h,m\rangle&=h|h,m\rangle,\qquad F_0|h,m\rangle=m|h,m\rangle,\\
 A_n|h,m\rangle&=0,\qquad \text{if}\ n>0\ \text{for all}\ A\in\{F,L,G,\bar{G}\}
\end{split}
\end{equation*}
Odake shows the following (Theorem 3 in \cite{OII}):
\begin{thm}
The algebra $\cO$ decomposes into highest-weight modules of the $N=2$ superconformal algebra with central charge $c=9$ as
\begin{equation}\label{eq:odakedecomp}
\cO=\bigoplus_{m\in\mathbb Z_{>0}} H_{m^2+m-1/2,2m+1} \oplus H_{0,0} \oplus  \bigoplus_{m\in\mathbb Z_{<0}} H_{m^2-m-1/2,2m-1}
\end{equation}
and the highest-weight vector of $H_{m^2+m-1/2,2m+1}$ is 
\begin{equation*}
 |m^2+m-1/2,2m+1\rangle=Y_{-2m}\cdots Y_{-6}Y_{-4}X_{-3/2}|0\rangle 
\end{equation*}
for $m>0$ and 
\begin{equation*}
 |m^2-m-1/2,2m-1\rangle=\bar Y_{2m}\cdots\bar Y_{-6}\bar Y_{-4}\bar X_{-3/2}|0\rangle 
\end{equation*}
for $m<0$.
\end{thm}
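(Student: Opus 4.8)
The plan is to work inside the free-field algebra used to define $\cO$, namely the rank-$6$ Heisenberg algebra generated by the $\alpha^\pm_i$ tensored with the rank-$3$ $bc$-system generated by the $b_i,c_i$, and to read off the decomposition from the bigrading of $\cO$ by the eigenvalues of $L_0$ and $F_0$. First I would fix this bigrading on the free generators: the relations in (\ref{n=2svir}) and those of $F$ with $X,\bar X,Y,\bar Y$ force $b_i$ and $c_i$ to carry $F_0$-charge $+1$ and $-1$ while the $\alpha^\pm_i$ are neutral, so that $G,X,Y$ have charges $1,3,2$ and weights $\tfrac32,\tfrac32,2$, with $\bar G,\bar X,\bar Y$ carrying the opposite charges. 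A bookkeeping of weights and charges then shows at once that the vectors $Y_{-2m}\cdots Y_{-4}X_{-3/2}|0\rangle$ for $m>0$ and $\bar Y_{2m}\cdots\bar Y_{-4}\bar X_{-3/2}|0\rangle$ for $m<0$ carry the eigenvalues $(m^2+m-\tfrac12,\,2m+1)$ and $(m^2-m-\tfrac12,\,2m-1)$ asserted in (\ref{eq:odakedecomp}).

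I would next check that each of these vectors is an $N=2$ highest-weight vector, i.e. is killed by $F_n,L_n,G_n,\bar G_n$ for $n>0$. The positive modes of $F$ and $L$ present no difficulty once one uses the primarity of $X$ and $Y$ together with $F_n|0\rangle=L_n|0\rangle=0$ for $n>0$. For the odd currents the key inputs are $G(z)X(w)\sim0$, $\bar G(z)Y(w)\sim0$, and $G(z)Y(w)\sim\tfrac32 X(w)(z-w)^{-2}+\tfrac12\partial X(w)(z-w)^{-1}$: moving a positive $G$- or $\bar G$-mode to the right through the string of $Y$'s produces only terms that annihilate $|0\rangle$ or that reduce, by induction on $m$, to highest-weight vectors already shown to be annihilated. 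The modules with $m<0$ follow from the symmetry $X\leftrightarrow\bar X$, $Y\leftrightarrow\bar Y$, $G\leftrightarrow\bar G$.

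The decisive step is to show that the listed vectors are the only new highest-weight vectors and that the $N=2$ submodules they generate are independent with sum equal to $\cO$. Here the nilpotency relations $:YY:=0$ and $:\bar Y\bar Y:=0$ of (\ref{odakerelations}) are essential: they force the $Y$-tower and $\bar Y$-tower to be one dimensional in each charge they occupy, producing exactly one new highest-weight vector of charge $2m\pm1$ and no highest-weight vector of even charge or of charge $\pm1$ (a charge-$1$ field such as $G=G_{-3/2}|0\rangle$ is already a descendant of $|0\rangle$, and $Y=\tfrac12\bar G_{-1/2}X_{-3/2}|0\rangle$ is a descendant of $X$ by the OPE $\bar G(z)X(w)\sim 2Y(w)(z-w)^{-1}$). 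I would make this precise by bosonizing both the Heisenberg sector and the $bc$-system, so that the free-field algebra becomes a lattice vertex algebra in which $F_0$ is a momentum and $Y,\bar Y$ are the charge-shifting vertex operators that build the two towers; the charge constraints defining $\cO$ then single out precisely the sectors $\{0,\pm3,\pm5,\dots\}$.

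Finally I would convert the decomposition into the character identity
\begin{equation*}
\text{tr}_{\cO}\,q^{L_0}y^{F_0}=\chi_{0,0}+\sum_{m>0}\chi_{m^2+m-1/2,\,2m+1}+\sum_{m<0}\chi_{m^2-m-1/2,\,2m-1},
\end{equation*}
where $\chi_{h,m}=\text{tr}_{H_{h,m}}\,q^{L_0}y^{F_0}$, computing the left-hand side from the free-field presentation and each $\chi_{h,m}$ from the $N=2$ module structure, and matching the two sides. I expect the main obstacle to be exactly this last structural point: determining which $c=9$ module occurs in each charge sector and its precise character requires controlling the null vectors of these modules, and these are not all of one type, since $X$ is chiral primary ($h=\tfrac{m}{2}$) whereas the higher towers are not. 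The relations $:YY:=0$, $:\bar Y\bar Y:=0$ and the bosonized picture are what make this tractable, reducing the identity to a theta-function identity of Jacobi triple-product type that encodes the underlying charge-lattice structure.
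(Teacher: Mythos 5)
First, a point of comparison: the paper does not prove this statement at all. It is quoted as Theorem~3 of Odake \cite{OII}, so there is no in-paper argument to measure your proposal against; what the paper supplies \emph{after} the statement (spectral flow and the simplicity of the $c=9$ vacuum module from \cite{HR}) is precisely the ingredient your outline lacks. Your first two steps are sound and follow Odake's free-field strategy: the charge/weight bookkeeping giving $(m^2+m-\tfrac12,\,2m+1)$ is correct, and the listed vectors are genuinely $N=2$ highest-weight vectors. One caveat on your induction, though: commuting $G_n$, $n>0$, through the string of $Y$'s does \emph{not} produce only terms that kill $|0\rangle$ or reduce to shorter highest-weight vectors. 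For instance $G_{1/2}Y_{-4}X_{-3/2}|0\rangle=\tfrac52 X_{-7/2}X_{-3/2}|0\rangle=\tfrac54:(\partial^2X)X:$, which vanishes only because $X=\,:b_1b_2b_3:$ forces $:(\partial^aX)X:\,=0$ for $a\le 2$ --- a relation that is not among \eqref{odakerelations} and must be extracted from the free-field realization. So even the ``easy'' half of your argument needs more than the listed OPEs.

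The genuine gap is the completeness step, and you have correctly identified where it sits but not how to close it. Reducing completeness to the character identity requires knowing the characters $\chi_{h,m}$ of the constituent modules, and the relations $:YY:=0$, $:\bar Y\bar Y:=0$ only tell you the towers of highest-weight vectors are thin; they do not determine the null-vector structure of each $H_{h,m}$, which is exactly the problem rather than a tool for solving it. The lever the paper uses (following the theorem statement) is spectral flow: the explicit computation relating the lattice state \eqref{eq:latticestate} to $Y_{-2m}\cdots Y_{-4}X_{-3/2}|0\rangle$ shows $H_{m^2+m-1/2,2m+1}\cong\sigma^*_{-m}(H_{0,0})$ (and its barred counterpart), and $H_{0,0}$ is the simple vacuum module by \cite{HR}. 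Hence every constituent is irreducible with character equal to a spectral flow of the known vacuum character, and summing these and comparing with the free-field character of $\cO$ becomes a finite, checkable Jacobi-triple-product identity. Without spectral flow or an equivalent identification of the module in each charge sector, your plan stalls at precisely the point you flag as the main obstacle; with it, the obstacle disappears. I would recommend restructuring the last step around the twisted modules $\sigma^*_\alpha(H_{0,0})$ rather than around a direct analysis of null vectors.
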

It is known that the vacuum Verma module of the $N=2$ superconformal
algebra with $c=9$ is simple \cite{HR}. There exists a family of automorphisms of this algebra, called spectral flow in physics. 
These are induced automorphisms $\sigma_\alpha, \alpha \in \mathbb Z$ of affine Weyl translations, and they act as
\begin{equation*}
\begin{split}
 \sigma_\alpha(L_n)&=L_n+\alpha F_n+\frac{3}{2}\alpha^2\delta_{n,0},\qquad \sigma_\alpha(F_n)=F_n+3\alpha\delta_{n,0}\\
\sigma_{\alpha}(G_n)&=G_{n+\alpha},\qquad \sigma_{\alpha}(\bar G_n)=\bar G_{n-\alpha}.
\end{split}
\end{equation*}
These can be used to define twisted modules $\sigma^*_\alpha(M)$ for a given module $M$. Each vector $v\in M$ is mapped to $\sigma^*_\alpha(v)$ and the twisted
algebra action is
\begin{equation*}
A\sigma^*_\alpha(v)=\sigma^*_\alpha(\sigma_{-\alpha}(A)v) 
\end{equation*}
for all $A$ in the $N=2$ superconformal algebra of central charge $c=9$. 
The twisted module of an irreducible module is itself irreducible. In particular, a twisted highest-weight vector
satisfying 
\begin{equation*}
A\sigma^*_\alpha(|0\rangle)=\sigma^*_\alpha(\sigma_{-\alpha}(A)|0\rangle) 
\end{equation*}
for all $A$ in the $N=2$ superconformal algebra of central charge $c=9$ must be an element of an irreducible module.
Exactly this equation is satisfied by
\begin{equation}\label{eq:latticestate}
X_{3/2+3\alpha}\cdots X_{-15/2}X_{-9/2}X_{-3/2}|0\rangle 
\end{equation}
for $\alpha<0$. For $\alpha>0$, replace $X$ by $\bar X$ and replace the subscript $3/2+3\alpha$ by $3/2-3\alpha$.
For $\alpha = -m$, the twisted highest-weight vector is related to the highest-weight vector $ |m^2+m-1/2,2m-1\rangle$ by
\begin{equation*}
\begin{split}
\bigl(\bar G_{1/2}\bar G_{3/2}\cdots\bar G_{3/2-\alpha}\bigr)X_{3/2+3\alpha}\cdots X_{-15/2}X_{-9/2}X_{-3/2}|0\rangle 
&=2^{-\alpha-1} Y_{2\alpha}\cdots Y_{-6} Y_{-4} X_{-3/2}|0\rangle\\
 &= 2^{-\alpha-1}|\alpha^2-\alpha-1/2,-2\alpha-1\rangle .
\end{split}
\end{equation*}
Hence we can conclude that $H_{m^2+m-1/2,2m+1}\cong \sigma^*_{-m}(H_{0,0})$ for $m>0$. 
Interchanging barred and unbarred quantities one similarly arrives at $H_{m^2-m-1/2,2m-1}\cong \sigma^*_{-m}(H_{0,0})$ for $m<0$. This means that \eqref{eq:odakedecomp} is a decomposition of Odake's algebra $\cO$ into irreducible modules of 
the $N=2$ superconformal algebra with central charge $c=9$. 
Furthermore, the Heisenberg modules (for the field $F(z)$) with highest-weight state \eqref{eq:latticestate}
and their barred counterparts generate a simple lattice vertex algebra, namely the simple $N=2$ superconformal algebra with central charge $c=1$. Combining these observations, we obtain
\begin{thm}
$\cO$ is a simple vertex algebra. 
\end{thm}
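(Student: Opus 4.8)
The plan is to show that every nonzero ideal of $\cO$ contains the vacuum $|0\rangle$, for then any $a\in\cO$ equals $a\circ_{-1}|0\rangle$ and so lies in the ideal, forcing it to be all of $\cO$. The whole argument rests on viewing $\cO$ through the two subalgebras already analyzed above: the simple $N=2$ vertex algebra $\cN$ of central charge $9$ generated by $F,L,G,\bar G$, whose simplicity is the content of \cite{HR}, and the simple lattice vertex algebra $\cL$ of central charge $1$ generated by the states \eqref{eq:latticestate} together with the Heisenberg field $F$.

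First I would observe that any ideal $I\subseteq\cO$ is in particular an $\cN$-submodule, since $\cN\subseteq\cO$ and $I$ is stable under all circle products with elements of $\cO$. By \eqref{eq:odakedecomp} we have $\cO=\bigoplus_i H_i$ with each $H_i$ a simple $\cN$-module, this being exactly the content of the identifications $H_i\cong\sigma^*_{-m}(H_{0,0})$ together with simplicity of the $c=9$ vacuum module. The summands are pairwise non-isomorphic, because their generating vectors carry the pairwise distinct $F_0$-charges $0$ and $2m\pm1$. Consequently every $\cN$-submodule of $\cO$ is a direct sum of a subfamily of the $H_i$, so $I=\bigoplus_{i\in S}H_i$ for some nonempty $S$.

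Next I would show that each summand meets $\cL$ nontrivially. The summand $H_{0,0}$ contains $|0\rangle\in\cL$, while for $i\neq0$ the corresponding lattice state \eqref{eq:latticestate} lies in $\cL$ and, by the displayed relation expressing $(\bar G_{1/2}\cdots)$ applied to that state as a nonzero multiple of the highest-weight vector of $H_i$, lies in $H_i$ as well; here one uses that the modes of $\bar G\in\cN$ preserve the $\cN$-submodule $H_i$. Fixing $i\in S$, this produces a nonzero element $\ell\in\cL\cap H_i\subseteq I$. Since $\cL$ is simple, the $\cL$-ideal generated by $\ell$ is all of $\cL$ and in particular contains $|0\rangle$; as this vacuum is a combination of iterated circle products of $\ell$ and elements of $\cL\subseteq\cO$, and $I$ is an ideal containing $\ell$, all such products lie in $I$, whence $|0\rangle\in I$ and $I=\cO$.

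The routine inputs, namely the operator product expansions and the spectral-flow identifications, are already in hand, so the main obstacle is conceptual rather than computational: one must be certain that \eqref{eq:odakedecomp} is genuinely multiplicity-free, so that an ideal is forced to be a union of whole $\cN$-sectors, and that some previously identified simple subalgebra connects every sector back to the vacuum. The lattice algebra $\cL$ plays precisely this latter role, and it is its simplicity that lets us bypass delicate nonvanishing arguments; the most laborious alternative would be to verify by direct calculation that an explicit product of charge-lowering modes sends each highest-weight vector to a nonzero multiple of $|0\rangle$.
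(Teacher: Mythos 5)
Your proposal is correct and follows essentially the same route as the paper: decompose $\cO$ into pairwise non-isomorphic irreducible modules over the simple $c=9$ $N=2$ subalgebra via spectral flow, observe that each sector contains one of the extremal lattice states, and use simplicity of the $c=1$ lattice subalgebra they generate to pull the vacuum into any nonzero ideal. The paper compresses this into the phrase ``combining these observations,'' so your write-up is in fact a more explicit version of the intended argument rather than a different one.
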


Finally, we will need the character formula for $\cO$, which was computed by Odake in \cite{OII}.
\begin{equation*}\label{char}
\begin{split}
\text{ch}[\cO](z;q)&=\text{tr}_{V_0}(q^{L(0)}z^{F(0)})\\
&=\prod_{n=1}^\infty \frac{(1+zq^{n-\frac{1}{2}})(1+z^{-1}q^{n-\frac{1}{2}})}{(1-q^n)^2}\sum_{m\in\mathbb Z}q^{m^2}z^{2m}-q^{m^2+m+\frac{1}{2}}z^{2m+1}.
\end{split}
\end{equation*} 
\end{example}

\subsection{The commutant construction}

Let $\cV$ be a vertex algebra and let $\cA$ be a subalgebra of $\cV$. The commutant of $\cA$ in $\cV$, denoted by $\text{Com}(\cA,\cV)$, is the subalgebra of vertex operators $v\in\cV$ such that $[a(z),v(w)] = 0$ for all $a\in\cA$. Equivalently, $a(z)\circ_n v(z) = 0$ for all $a\in\cA$ and $n\geq 0$. We regard $\text{Com}(\cA,\cV)$ as the algebra of  invariants in $\cV$ under the action of $\cA$. If $\cA$ is a homomorphic image of an affine vertex algebra $V_k(\gg,B)$, $\text{Com}(\cA,\cV)$ is just the invariant space $\cV^{\gg[t]}$. It is also natural to study the double commutant $\text{Com}(\text{Com}(\cA,\cV),\cV)$, which always contains $\cA$. If $\cA = \text{Com}(\text{Com}(\cA,\cV),\cV)$, we say that $\cA$ and $\text{Com}(\cA,\cV)$ form a {\it Howe pair} inside $\cV$. Since $$\text{Com}(\text{Com}(\text{Com}(\cA,\cV),\cV),\cV) = \text{Com}(\cA,\cV),$$ a subalgebra $\cB$ is a member of a Howe pair if and only if $\cB = \text{Com}(\cA,\cV)$ for some $\cA$.

Let $\gg$ be a simple, finite-dimensional complex Lie algebra and $V$ be an $n$-dimensional linear representation of $\gg$ via $\rho:\gg\rightarrow \text{End}(V)$. Associated to $\rho$ is the bilinear form $B(\xi,\eta) = \text{tr}_V(\rho(\xi) \rho(\eta))$. There is a vertex algebra homomorphism \begin{equation*} \label{defthetae}\tau_{\cE}: V_1(\gg,B)\rightarrow \cE,\ \ \ \ \ \ \ \tau_{\cE}(X^{\xi}) =  \Theta^{\xi}_{\cE} = \sum_{i=1}^n :b^{\rho(\xi)(x_i)} c^{x'_i}:.\end{equation*} There is a similar homomorphim \begin{equation*} \label{defthetas} \tau_{\cS}:V_{-1}(\gg,B)\ra \cS,\ \ \ \ \ \ \ \tau_{\cS}(X^{\xi}) = \Theta^{\xi}_{\cS} = -\sum_{i=1}^n :\beta^{\rho(\xi)(x_i)} \gamma^{x'_i}:.\end{equation*}
There is the combined action \begin{equation*} \label{defthetaw} \tau_{\cW}: V_0(\gg)\ra \cW= \cE\otimes \cS,\ \ \ \  \ \ \ \tau_{\cW}(X^{\xi}) = \Theta^{\xi}_{\cW} = \Theta^{\xi}_{\cE} +\Theta^{\xi}_{\cS}.\end{equation*}

Let $\Theta_{\cE}$, $\Theta_{\cS}$, and $\Theta_{\cW}$ denote the subalgebras of $\cE$, $\cS$, and $\cW$ generated by $\{\Theta^{\xi}_{\cE}\}$, $\{\Theta^{\xi}_{\cS}\}$, and $\{\Theta^{\xi}_{\cW}\}$, respectively, for $\xi\in\gg$. We are interested in the commutants $\text{Com}(\Theta_{\cE},\cE)$, $\text{Com}(\Theta_{\cS},\cS)$, and $\text{Com}(\Theta_{\cW},\cW)$, which coincide with $\cE^{\gg[t]}$, $\cS^{\gg[t]}$, and $\cW^{\gg[t]}$, respectively. Since $\cS^{\gg[t]}\subset \cW^{\gg[t]}$, we may define \begin{equation*}\cC(\gg,V) = \text{Com}(\cS^{\gg[t]}, \cW^{\gg[t]}).\end{equation*} We will study $\cC(\gg,V)$ in the special cases where $\gg = \gs\gl_2$ and $V$ is the standard representation $\mathbb{C}^2$ and adjoint representation $\mathbb{C}^3$.

\section{Some general features of $\cE^{\gg[t]}$, $\cS^{\gg[t]}$, $\cW^{\gg[t]}$, and $\cC(\gg,V)$}
First, $\cE^{\gg[t]}$ possesses a Virasoro element $$L_{\cE^{\gg[t]}} = L_{\cE} - \tau_{\cE}( L_{\text{Sug}}),$$ where $L_{\text{Sug}}$ is the Sugawara conformal vector in $V_1(\gg,B)$. There is also a $U(1)$-current  $$F = -\sum_{i=1}^n :b^{x_i} c^{x'_i}:.$$
Similarly, $\cS^{\gg[t]}$ possesses a Virasoro element $$L_{\cS^{\gg[t]}} = L_{\cS} - \tau_{\cS}(L_{\text{Sug}})$$ where $L_{\text{Sug}}$ is the Sugawara vector in $V_{-1}(\gg,B)$, which exists unless $-B$ is the critical level bilinear form. There is also a $U(1)$ current $$H = \sum_{i=1}^n :\beta^{x_i} \gamma^{x'_i}:.$$ If $V$ possesses a $\gg$-invariant, symmetric bilinear form, we may choose a corresponding orthonormal basis $x_1,\dots,x_n$ for $V$ and dual basis $x'_1,\dots,x'_n$ for $V^*$. Then $H$ is part of an action of $V_{-n/2}(\gs\gl_2)$ on $\cS^{\gg[t]}$, given by 
\begin{equation*}\label{slii} 
\begin{split}
X^h\mapsto v^h &= \sum_{i=1}^n :\beta^{x_i} \gamma^{x'_i}:,  \qquad \ X^x\mapsto  v^x =    \frac{1}{2} \sum_{i=1}^n:\gamma^{x'_i} \gamma^{x'_i}:,  \\ 
X^y\mapsto   v^y &=  -\frac{1}{2} \sum_{i=1}^n:\beta^{x_i} \beta^{x_i}:.
\end{split}
\end{equation*} 
The generators $X^x, X^y, X^h$ for $V_{-n/2}(\gs\gl_2)$ are in the usual root basis $x,y,h$ with commutation relations $$[x,y]=h,\ \ \ \ \ \ \ \ \ \ \  [h,x]=2x,\ \ \ \ \ \ \ \ \ \ \  [h,y]=-2y.$$ 
Finally, $\cW^{\gg[t]}$ has a Virasoro element $$L_{\cW^{\gg[t]}}= L_{\cW} - \tau_{\cW}( L_{\text{Sug}}).$$ If $V$ possesses a symmetric invariant form as above, the $V_{-n/2}(\gs\gl_2)$-structure on $\cS^{\gg[t]}$ extends to an action of the affine vertex superalgebra $V_n(\go\gs\gp(2|2))$ on $\cW^{\gg[t]}$. The Lie superalgebra $\go\gs\gp(2|2)$ is generated by four even elements $X,Y,H,E$ and four odd elements $F^{\epsilon\epsilon'}$, where $\epsilon,\epsilon'\in\{\pm\}$, and nonzero relations
\begin{equation*}
\begin{split}
&[X,Y]\, = \,  H \ \,\ \ [H,X]\,=\, 2X\ \ ,\ \ [H,Y]\,=\,-2Y \,,\ \
[E,F^{\epsilon\epsilon'}]\, =\, \epsilon' F^{\epsilon\epsilon'}\ \ ,\\
&[H,F^{\epsilon\epsilon'}]\, =\, \epsilon F^{\epsilon\epsilon'}\ \ ,\ \
[Y,F^{+\epsilon}]\, =\, -F^{-\epsilon}\ \ ,\ \
[X,F^{-\epsilon}]\, =\, -F^{+\epsilon}\ \ , \ \ 
[F^{+-},F^{-+}]\, =\, H+E\,, \\ 
&[F^{+-},F^{++}]\, =\, 2X \ \ , \ \
[F^{--},F^{++}]\, =\, H-E \ \ , \ \  [F^{--},F^{-+}]\, =\, -2Y\, .
\end{split}
\end{equation*}
A nondegenerate consistent graded-symmetric invariant bilinear form is given by
\begin{equation*}
\begin{split}
&B(H,H)\,=\, -1 \ \ , \ \ B(X,Y)\,=\, -\frac{1}{2} \ \ , \ \ B(E,E) \, = \, 1\,,\\
&B(F^{+-},F^{-+}) \,= \, -1 \ \ , \ \ B(F^{--},F^{++}) \,= \, 1\, .
\end{split}
\end{equation*}  

The Lie superalgebra $\go\gs\gp(2|2)$ is simple and the bilinear form $B$ is unique up to a scalar multiple. Define vertex operators $$Q^{\beta b} = \sum_{i=1}^n :\beta^{x_i} b^{x_i},\ \ \ \ Q^{\beta c} = \sum_{i=1}^n :\beta^{x_i} c^{x'_i}:,\ \ \ \  Q^{\gamma b} = \sum_{i=1}^n :\gamma^{x'_i} b^{x_i}:, \ \ \ \ Q^{\gamma c} = \sum_{i=1}^n :\gamma^{x'_i} c^{x'_i}:,$$ which are easily seen to lie in $\cW^{\gg[t]}$. Then the $V_{-n/2} (\gs\gl_2)$ structure on $\cS^{\gg[t]}$ extends to an action of the affine vertex superalgebra $V_n(\go\gs\gp(2|2))$ on $\cW^{\gg[t]}$, where the generators are the quadratics $v^x, v^y, v^h, F, Q^{\beta b}, Q^{\beta c}, Q^{\gamma b}, Q^{\gamma c}$.

There are also a few properties of $\cC(\gg,V)$ which are worth pointing out. First, $\cE^{\gg[t]}$ is always a subalgebra of $\cC(\gg,V)$. In general, $\cC(\gg,V)$ is bigger than $\cE^{\gg[t]}$ and contains fields that depend on $\beta^x, \gamma^{x'}$. There is a Virasoro element in  $\cC(\gg,V)$ given by $$L = L_{\cW^{\gg[t]}} - L_{\cS^{\gg[t]}}.$$

\begin{thm} \label{howepair} $Com(\cC(\gg,V), \cW^{\gg[t]}) = \cS^{\gg[t]}$, so $\cS^{\gg[t]}$ and $\cC(\gg,V)$ form a Howe pair inside $\cW^{\gg[t]}$.
\end{thm}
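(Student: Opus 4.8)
The inclusion $\cS^{\gg[t]}\subseteq\text{Com}(\cC(\gg,V),\cW^{\gg[t]})$ holds automatically, since $\cC(\gg,V)=\text{Com}(\cS^{\gg[t]},\cW^{\gg[t]})$ by definition and any subalgebra is contained in the commutant of its commutant. The entire content of the theorem is therefore the reverse inclusion $\text{Com}(\cC(\gg,V),\cW^{\gg[t]})\subseteq\cS^{\gg[t]}$, i.e. that $\cS^{\gg[t]}$ is already its own double commutant inside $\cW^{\gg[t]}$.

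A natural first step is to use $\cE^{\gg[t]}\subseteq\cC(\gg,V)$, which reverses to $\text{Com}(\cC(\gg,V),\cW^{\gg[t]})\subseteq\text{Com}(\cE^{\gg[t]},\cW^{\gg[t]})$; if one could show $\text{Com}(\cE^{\gg[t]},\cW^{\gg[t]})=\cS^{\gg[t]}$ the theorem would follow, but in general the full centralizer $\cC(\gg,V)$ is needed. In any case the computation of such commutants is carried out by passing to the associated graded algebra. Recall that $\cW$ carries a good increasing filtration whose associated graded $\text{gr}(\cW)$ is the free supercommutative vertex Poisson algebra $R=R_\cE\otimes R_\cS$ on the symbols of $\partial^k b^{x},\partial^k c^{x'}$ (odd) and $\partial^k\beta^{x},\partial^k\gamma^{x'}$ (even), and that the induced Poisson bracket pairs the $bc$-symbols among themselves and the $\beta\gamma$-symbols among themselves. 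Because $\gg$ is reductive, taking $\gg[t]$-invariants commutes with $\text{gr}$, so $\text{gr}(\cW^{\gg[t]})=R^{\gg[t]}$, $\text{gr}(\cS^{\gg[t]})=R_\cS^{\gg[t]}$, and $\text{gr}(\cE^{\gg[t]})=R_\cE^{\gg[t]}$; moreover, since $\cE$ and $\cS$ are mutual commutants in $\cW$ (the $bc$-system being simple with one-dimensional center), $R_\cE$ and $R_\cS$ are mutual Poisson centralizers in $R$.

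The proof then rests on a classical double-centralizer statement for the diagonal $\gg$-action: the Poisson centralizer, inside $R^{\gg[t]}$, of the Poisson centralizer of $R_\cS^{\gg[t]}$ is again $R_\cS^{\gg[t]}$. I would establish this using the first and second fundamental theorems of invariant theory for $\gg$ acting on the two (super)symplectic spaces underlying $R_\cE$ and $R_\cS$: the first fundamental theorem produces generators for the invariant rings and for the classical centralizer of $R_\cS^{\gg[t]}$, while the second (the relations among invariants) is what excludes spurious elements from the double centralizer. This is a Howe-duality computation, and it is here that $\cC(\gg,V)$ must be understood classically rather than replaced by its subalgebra $\cE^{\gg[t]}$.

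Finally I would transfer the conclusion back. For a filtered element of $\text{Com}(\cC(\gg,V),\cW^{\gg[t]})$, the defining relations $a\circ_n v=0$ for $a\in\cC(\gg,V)$, $n\geq0$, force its leading symbol to Poisson-commute with $\text{gr}(\cC(\gg,V))$; granting that $\text{gr}(\cC(\gg,V))$ is the full classical centralizer of $R_\cS^{\gg[t]}$, the double-centralizer statement gives $\text{gr}\big(\text{Com}(\cC(\gg,V),\cW^{\gg[t]})\big)\subseteq R_\cS^{\gg[t]}$. Since $\cS^{\gg[t]}\subseteq\text{Com}(\cC(\gg,V),\cW^{\gg[t]})$ and $\text{gr}(\cS^{\gg[t]})=R_\cS^{\gg[t]}$, the two filtered subalgebras have equal associated graded and hence coincide. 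The main obstacle is precisely the parenthetical lifting step—showing that the vertex-algebra commutant $\cC(\gg,V)$ realizes the \emph{entire} classical centralizer of $R_\cS^{\gg[t]}$, so that no classical centralizing invariant is missed—together with the double-centralizer computation itself, which rests on the second fundamental theorem. The reductivity of $\gg$ and the explicit Poisson structure on $R$ are what make both tractable.
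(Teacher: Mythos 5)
Your setup is correct: the inclusion $\cS^{\gg[t]}\subseteq\text{Com}(\cC(\gg,V),\cW^{\gg[t]})$ is indeed automatic, and the content of the theorem is the reverse inclusion. But what you then offer is a strategy rather than a proof, and the two steps you yourself flag as ``the main obstacle'' are exactly the ones that cannot be taken for granted. First, you need $\text{gr}(\cC(\gg,V))$ to be the \emph{entire} Poisson centralizer of $R_{\cS}^{\gg[t]}$ in $R^{\gg[t]}$; in general one only has that $\text{gr}$ of a commutant is \emph{contained} in the classical centralizer of the associated graded, and upgrading this to equality is a hard lifting problem. The paper gives no description of $\text{gr}(\cC(\gg,V))$ and in fact states explicitly that it cannot even describe $\cW^{\gg[t]}$ by its methods in the adjoint $\gs\gl_2$ case, so this step is not available. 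Second, the classical double-centralizer statement you want to extract from the first and second fundamental theorems of invariant theory is nowhere carried out; for the infinitely many variables $\partial^k b^{x},\partial^k c^{x'},\partial^k\beta^{x},\partial^k\gamma^{x'}$ the relations among invariants are genuinely complicated, and you give no argument that they exclude spurious elements from the double centralizer. As written, the proposal reduces the theorem to two unproven, and arguably harder, statements.

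The paper's actual proof is a two-line conformal-weight argument that sidesteps all of this. The commutant $\cC(\gg,V)$ carries the Virasoro element $L=L_{\cW^{\gg[t]}}-L_{\cS^{\gg[t]}}$, which is a conformal structure on $\cC(\gg,V)$. Consequently, on $\text{Com}(\cC(\gg,V),\cW^{\gg[t]})$ the difference $L_{\cW^{\gg[t]}}-L=L_{\cS^{\gg[t]}}$ acts as a conformal structure, i.e.\ $L_{\cS^{\gg[t]}}\circ_0 v=\partial v$ for every $v$ in that commutant. Since $L_{\cS^{\gg[t]}}$ is built only from the $\beta,\gamma$ generators, a field satisfying this cannot depend on $b^x,c^{x'}$ or their derivatives, hence lies in $\cS\cap\cW^{\gg[t]}=\cS^{\gg[t]}$. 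If you want to pursue your associated-graded route you would essentially be redoing the invariant-theoretic work of \cite{LL} and \cite{L}; the observation that the two Virasoro elements are complementary makes that unnecessary here.
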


\begin{proof} Clearly $\cS^{\gg[t]} \subset \text{Com}(\cC(\gg,V),\cW^{\gg[t]})$. Since $L_{\cW^{\gg[t]}}$ and $L$ are conformal structures on $\cW^{\gg[t]}$ and $\cC(\gg,V)$, respectively, $$L_{\cW^{\gg[t]}}- L = L_{\cS^{\gg[t]}}$$ is a conformal structure on $\text{Com}(\cC(\gg,V),\cW^{\gg[t]})$. Any field in $\text{Com}(\cC(\gg,V),\cW^{\gg[t]})$ therefore cannot depend on the generators $b^x, c^{x'}$ of $\cE$ and their derivatives. We conclude that $\text{Com}(\cC(\gg,V),\cW^{\gg[t]})\subset \cS^{\gg[t]}$. \end{proof}

\section{The case where $\gg = \gs\gl_2$ and $V$ is the adjoint module}
In this section, we consider $\cE^{\gg[t]}$, $\cS^{\gg[t]}$, $\cW^{\gg[t]}$, and $\cC(\gg,V)$ in the case where $\gg = \gs\gl_2$ and $V$ is the adjoint representation $\mathbb{C}^3$. We work in the standard root basis $x,y,h$ for $\gs\gl_2$. The generators of $\Theta_{\cW}$ in this basis are
\begin{equation*}
 \begin{split}
\Theta^x_{\cW} &= \Theta^x_{\cE}+\Theta^x_{\cS},\ \ \ \ \Theta^x_{\cS} = 2 :\beta^x \gamma^{h'}: - :\beta^h \gamma^{y'}: ,\ \ \ \ \Theta^x_{\cE} =   - 2:b^x c^{h'}: + :b^h, c^{y'}:,\\
\Theta^y_{\cW} &= \Theta^y_{\cE}+\Theta^y_{\cS},\ \ \ \ \ \Theta^y_{\cS} = -2 :\beta^y \gamma^{h'}: + :\beta^h \gamma^{x'}:  ,\ \ \ \ \Theta^y_{\cE} =   2 :b^y c^{h'}: - :b^h c^{x'}:,\\
\Theta^h_{\cW} &= \Theta^h_{\cE}+\Theta^h_{\cS},\ \ \ \ \Theta^h_{\cS} = -2 :\beta^x \gamma^{x'}:  + 2 : \beta^y \gamma^{y'}: ,\ \ \ \ \Theta^h_{\cE} =   2 :b^x c^{x'}: - 2 :b^y c^{y'}:.
\end{split}
\end{equation*}

\begin{thm} \label{spiece}$\cS^{\gs\gl_2[t]}$ is isomorphic to $V_{-3/2} (\gs\gl_2)$ and is generated by 
\begin{equation*}
\begin{split}
 v^h &=\  :\beta^h \gamma^{h'}: + :\beta^x \gamma^{x'}: +:\beta^y\gamma^{y'}: ,\\
 v^x &= \frac{1}{2}\big(:\gamma^{h'} \gamma^{h'}: + :\gamma^{x'} \gamma^{y'}:\big),\\ 
 v^y &= -\frac{1}{2}\big(:\beta^h \beta^h:+ 4:\beta^x \beta^{y}:\big).
\end{split}
\end{equation*} \end{thm}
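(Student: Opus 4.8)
The plan is to construct a vertex algebra homomorphism $\phi\colon V_{-3/2}(\gs\gl_2)\to\cS^{\gs\gl_2[t]}$ sending the standard generators to $v^x, v^y, v^h$, and then to establish injectivity and surjectivity separately; surjectivity is where the real work lies.

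First I would verify directly from the $\beta\gamma$ operator products that $v^x, v^y, v^h$ satisfy the defining relations of $V_{-3/2}(\gs\gl_2)$. Since these fields are quadratic, the only singular contributions come from Wick double contractions, and a short computation gives $v^h(z)v^h(w)\sim -3(z-w)^{-2}$ (confirming the level $-n/2=-3/2$), $v^h(z)v^x(w)\sim 2v^x(w)(z-w)^{-1}$, $v^h(z)v^y(w)\sim -2v^y(w)(z-w)^{-1}$, $v^x(z)v^y(w)\sim -\tfrac32(z-w)^{-2}+v^h(w)(z-w)^{-1}$, and $v^x(z)v^x(w)\sim v^y(z)v^y(w)\sim 0$. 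As $V_{-3/2}(\gs\gl_2)$ is the universal affine vertex algebra at this level, this yields $\phi$; that the three fields actually lie in $\cS^{\gs\gl_2[t]}$ follows from the general $V_{-n/2}(\gs\gl_2)$-action of Section 3, the Killing form supplying the required invariant form on the adjoint module $V=\mathbb{C}^3$.

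For injectivity I would pass to the associated graded with respect to the standard good increasing filtration on $\cS$, under which $\text{gr}(\cS)$ is the polynomial (arc) algebra $\mathbb{C}[\partial^j\beta^{x_i},\partial^j\gamma^{x'_i}]$ and $\text{gr}(V_{-3/2}(\gs\gl_2))$ is the polynomial algebra on the currents and their derivatives. It then suffices to show that the three symbols $(\gamma^{h'})^2+\gamma^{x'}\gamma^{y'}$, $(\beta^h)^2+4\beta^x\beta^y$, and $\beta^h\gamma^{h'}+\beta^x\gamma^{x'}+\beta^y\gamma^{y'}$, together with all their $\partial$-derivatives, are algebraically independent in $\text{gr}(\cS)$; this makes the induced map of commutative algebras injective, and hence $\phi$ injective.

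Surjectivity is the heart of the matter and the step I expect to be the main obstacle. Working again in $\text{gr}(\cS)$, the task reduces to showing that the differential algebra of invariants $(\text{gr}\,\cS)^{\gs\gl_2[t]}$ is generated by the three quadratic symbols above and their derivatives. The subtlety is classical: for many copies of the adjoint module the zero-mode ($\gs\gl_2$) invariants are generated not only by the quadratic contractions but also by the cubic determinants $\det(w_i,w_j,w_k)$ of the $V$-valued symbols, so a priori these could produce invariants outside the image of $\phi$. The key point to establish is that such cubic invariants fail to be annihilated by the positive modes $\xi\otimes t^m$ with $m\geq 1$ — a direct analysis of the prolonged (arc-space) vector field acting on $\det(\beta,\partial\beta,\partial^2\beta)$ already shows it is not $\gs\gl_2[t]$-invariant — so that imposing the full current-algebra invariance collapses the invariant ring onto the differential algebra generated by the quadratics. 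To be safe I would corroborate this by a graded character computation: compute $\text{ch}(\cS^{\gs\gl_2[t]})$, refined by conformal weight and the $v^h$-charge, by projecting the full $\beta\gamma$ character onto $\gs\gl_2$-invariants, and match it against the character $\prod_{n\geq 1}(1-q^n)^{-3}$ of $V_{-3/2}(\gs\gl_2)$; equality of these refined characters together with the injectivity of $\phi$ forces $\phi$ to be an isomorphism.
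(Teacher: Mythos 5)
The paper does not actually prove this statement; it simply cites \cite{L} (with the Howe-pair refinement in \cite{LL}), so there is no in-paper argument to compare against line by line. Your overall strategy --- build the homomorphism from the universal $V_{-3/2}(\gs\gl_2)$ by checking OPEs, prove injectivity on the associated graded, and reduce surjectivity to arc-space (jet-scheme) invariant theory for $\gs\go_3\cong\gs\gl_2$ acting on copies of $\mathbb{C}^3$ --- is the right one and is in the spirit of \cite{L} and \cite{LSS}. The OPE computation, the identification of the level, and the reduction of surjectivity to showing that $(\text{gr}\,\cS)^{\gs\gl_2[t]}$ is the differential algebra generated by the three quadratic symbols are all sound.

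However, your execution of the surjectivity step has a genuine gap, in two places. First, verifying that a particular determinant such as $\det(\beta,\partial\beta,\partial^2\beta)$ fails to be $\gs\gl_2[t]$-invariant does not show that the loop-invariant ring is generated by the quadratics. By Weyl's FFT the zero-mode invariant ring is spanned by the pairing algebra $A$ together with the $A$-module generated by all determinants $\det(w_i,w_j,w_k)$ in the infinitely many vectors $\partial^j\beta,\partial^j\gamma$; a $\gs\gl_2[t]$-invariant could a priori be a nontrivial $A$-linear combination of determinants even though no single determinant is invariant. What must be shown is that the entire ``odd'' part (with respect to the $O_3/SO_3$ sign grading, which is preserved by the $\gs\gl_2[t]$-action) contains no nonzero $\gs\gl_2[t]$-invariant; this requires an argument about linear combinations, not individual generators. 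Second, the proposed corroborating character computation does not compute the right object: projecting the full $\beta\gamma$ character onto $\gs\gl_2$-invariants yields $\text{ch}(\cS^{SL_2})$, the invariants of the zero modes only, whereas the commutant is $\cS^{\gs\gl_2[t]}$, the invariants of all nonnegative modes. These two spaces genuinely differ --- the class of $\det(\beta,\partial\beta,\partial^2\beta)$ contributes to $\cS^{SL_2}$ in weight $6$ but, if the theorem is true, not to $\cS^{\gs\gl_2[t]}$ --- so the projected character is strictly larger than $\prod_{n\geq 1}(1-q^n)^{-3}$ and the check cannot close the argument. Unlike the $bc$-system situation later in the paper, there is no deformation argument here identifying the graded dimensions of the loop-invariants with those of the group-invariants, so the character route would need a separate justification.
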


This was proven in \cite{L}. In fact, $\cS^{\gs\gl_2[t]}$ and $\Theta_{\cS}$ form a Howe pair inside $\cS$ (see \cite{LL}).

Next, we recall the Friedan-Martinec-Shenker bosonization of fermions \cite{FMS}. 
Let $\mathfrak{h}$ be the Heisenberg Lie algebra with generators $j(n)$, $n\in \mathbb{Z}$, and $\kappa$, satisfying $[j(n),j(m)] = n \delta_{n+m,0}\kappa$.
We set the central element to one, $\kappa=1$. 
The field $j(z)= \sum_{n\in \mathbb{Z}} j(n) z^{-n-1}$ satisfies the operator product $$j(z)j(w)\sim (z-w)^{-2},$$ and generates a level one Heisenberg vertex algebra $\cH$. Define the {\it free bosonic scalar field}
$$\phi(z) = q+ j(0) \ln z - \sum_{n\neq 0} \frac{j(n)}{n} x^{-n},$$ where $q$ satisfies $[j(n),q] = \delta_{n,0}$. Clearly $\partial \phi(z) = j(z)$, and we have the operator product $$\phi(z)\phi(w)\sim \ln(z-w).$$
Given $\alpha\in \mathbb{C}$, let $\cH_{\alpha}$ denote the irreducible representation of $\mathfrak{h}$ generated by the vacuum vector $v_{\alpha}$ satisfying \begin{equation*} j(n) v_{\alpha}= \alpha \delta_{n,0} v_{\alpha},~~~ n\geq 0.\end{equation*} Given $\eta\in \mathbb{C}$, the operator $e^{\eta q}(v_{\alpha}) = v_{\alpha+\eta}$, so $e^{\eta q}$ maps $\cH_{\alpha}\ra \cH_{\alpha+\eta}$. Define the vertex operator
$$\Gamma_{\eta}(z) = e^{\eta \phi(z)} = e^{\eta q} z^{\eta \alpha} \text{exp} \bigg(\eta \sum_{n>0} j(-n) \frac{z^n}{n}\bigg) \text{exp}\bigg(\eta\sum_{n<0} j(-n) \frac{z^n}{n}\bigg).$$ The $\Gamma_{\eta}$ satisfy the operator products
$$j(z) \Gamma_{\eta}(w) \sim \eta \Gamma_{\eta}(w)(z-w)^{-1} + \frac{1}{\eta} \partial \Gamma_{\eta}(w),$$
$$ \Gamma_{\eta}(z) \Gamma_{\nu}(w) \sim (z-w)^{\eta\nu} : \Gamma_{\eta}(z) \Gamma_{\nu}(w):.$$
If we take $L$ to be the one-dimensional lattice $\mathbb{Z}$ with generator $\eta$, the pair of (fermionic) fields $\Gamma_{\eta}, \Gamma_{-\eta}$ generate the lattice vertex algebra $V_L$. The state space of $V_L$ is just $\sum_{n\in\mathbb{Z}} \cH_n = \cH\otimes_{\mathbb{C}} L$. It follows that $$\Gamma_{\eta}(z) \Gamma_{-\eta}(w) \sim (z-w)^{-1}, \qquad \Gamma_{-\eta}(z) \Gamma_{\eta}(w) \sim (z-w)^{-1},$$ $$\Gamma_{\eta}(z) \Gamma_{\eta}(w) \sim 0,\qquad \Gamma_{-\eta}(z) \Gamma_{-\eta}(w)\sim 0,$$ so $V_L$ is isomorphic to the $bc$-system of rank $1$. 

\begin{thm} \label{commbc} $\cE^{\gs\gl_2[t]}$ is strongly generated by
\begin{equation*} 
\begin{split}
F &= -: b^h c^{h'}: - : b^x c^{x'}: - : b^y c^{y'}:,\\
C^{bbb} &=\ :b^x b^y b^h:,\\
C^{ccc} &=\ :c^{x'} c^{y'} c^{h'}:.
\end{split}
 \end{equation*} \end{thm}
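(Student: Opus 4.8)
The plan is to prove the two inclusions separately, first exhibiting $F$, $C^{bbb}$, $C^{ccc}$ as elements of the commutant and then showing, via bosonization and a character count, that they strongly generate all of it. For the easy inclusion I would simply verify that $\Theta^\xi_{\cE}\circ_n F = \Theta^\xi_{\cE}\circ_n C^{bbb} = \Theta^\xi_{\cE}\circ_n C^{ccc} = 0$ for all $\xi\in\gs\gl_2$ and all $n\geq 0$. At $n=0$ these are the statements that the ghost-number current $F$ and the two ``volume forms'' $C^{bbb}=\epsilon_{ijk}:b^{x_i}b^{x_j}b^{x_k}:$ and $C^{ccc}=\epsilon_{ijk}:c^{x'_i}c^{x'_j}c^{x'_k}:$ are $\gs\gl_2$-invariant, which is immediate since the adjoint action is by $\go_3\cong\gs\gl_2$ and these are exactly the basic $SO(3)$-invariants; for $n\geq 1$ one checks that the (finitely many) contraction terms vanish. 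Writing $\cA$ for the subalgebra they strongly generate, this gives $\cA\subseteq \cE^{\gs\gl_2[t]}$.

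Next I would identify $\cA$ explicitly using the Friedan--Martinec--Shenker bosonization recalled above. Introducing three independent bosons $\phi_1,\phi_2,\phi_3$ with $\phi_i(z)\phi_j(w)\sim\delta_{ij}\ln(z-w)$ and $b^{x_i}=\,:e^{\phi_i}:$, $c^{x'_i}=\,:e^{-\phi_i}:$, and setting $\Phi=\phi_1+\phi_2+\phi_3$, one computes $F=-\partial\Phi$, $C^{bbb}=\,:e^{\Phi}:$, and $C^{ccc}=\,:e^{-\Phi}:$, with $\Phi(z)\Phi(w)\sim 3\ln(z-w)$. Thus all three generators depend only on the single boson $\Phi$, so $\cA$ is the rank-one lattice vertex algebra $V_L$ with $L=\sqrt{3}\,\Z$, strongly generated by $\partial\Phi$ and $:e^{\pm\Phi}:$. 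In particular $F(z)F(w)\sim 3(z-w)^{-2}$, matching the $U(1)$-current normalization in (\ref{n=2svir}), and the character of $\cA$ is the usual lattice character, namely a theta function in $q$ and the $F$-charge variable $u$ times the Heisenberg factor $\prod_{n\geq1}(1-q^n)^{-1}$.

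To close the argument I would compute the graded character of $\cE^{\gs\gl_2[t]}$ directly and compare. Since $\cE$ is a module over the affine algebra $\Theta_{\cE}$, a homomorphic image of $V_1(\gs\gl_2,B)$, its $\gg[t]$-invariant subspace is the multiplicity space of the vacuum module, which I would extract from the explicit product-form character of the rank-three $bc$-system by the Weyl--Kac alternating-sum (equivalently, contour-integral) procedure over the $\gs\gl_2$ torus. The claim is that this equals $\text{ch}[\cA]$; granting it, the inclusion $\cA\subseteq\cE^{\gs\gl_2[t]}$ together with equality of characters forces $\cA=\cE^{\gs\gl_2[t]}$, which is the asserted strong generation.

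The main obstacle is precisely this character identity, i.e.\ the reverse inclusion, and the reason it is delicate is instructive. At the level of the associated graded --- equivalently, under the zero-mode $SO(3)$-action alone --- there are genuine invariants beyond the three generators, most visibly the mixed contractions $\epsilon_{ijk}:b^{x_i}b^{x_j}c^{x'_k}:$ and $\epsilon_{ijk}:b^{x_i}c^{x'_j}c^{x'_k}:$, which for degree reasons are \emph{not} normally ordered polynomials in $F$, $C^{bbb}$, $C^{ccc}$ and their derivatives. The substance of the theorem is that such fields fail the higher-mode conditions $\Theta^\xi_{\cE}\circ_n(\cdot)=0$ for $n\geq 1$ and therefore never survive into the commutant, so a naive associated-graded computation overcounts. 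The exact character computation is what detects and eliminates these spurious invariants, and I would expect the bookkeeping in this step --- organizing the $bc$-Fock space by conformal weight and $\gs\gl_2$-weight and carrying out the vacuum-multiplicity extraction --- to be the technically demanding part of the proof.
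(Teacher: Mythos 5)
Your identification of the subalgebra $\cA$ generated by $F$, $C^{bbb}$, $C^{ccc}$ is correct and agrees with the paper: after bosonization these fields depend only on the single boson $\Phi=\phi_1+\phi_2+\phi_3$ with $\Phi(z)\Phi(w)\sim 3\ln(z-w)$, so $\cA$ is the rank-one lattice vertex algebra on $\sqrt{3}\,\Z$ (in the paper's notation this is the $A=X+Y+H$ direction, $A\cdot A=3$). The easy inclusion $\cA\subseteq\cE^{\gs\gl_2[t]}$ is also handled the same way. Where you diverge is the hard inclusion. You propose to compute $\text{ch}[\cE^{\gs\gl_2[t]}]$ by decomposing $\cE$ over the image of $V_1(\gs\gl_2,B)$ (level $4$ in the standard normalization) and extracting the vacuum-module multiplicity by a Weyl--Kac alternating sum, then matching against the lattice character. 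This is a viable route --- the level is a positive integer, $\cE$ decomposes into integrable modules, so complete reducibility holds and $\cE^{\gs\gl_2[t]}$ really is the vacuum multiplicity space; you should say this explicitly, since without complete reducibility the invariant subspace need not coincide with a multiplicity space. The paper avoids this branching computation entirely: it splits the rank-three lattice orthogonally as $\Z A\oplus\langle B,C\rangle$ with $B=X+Y-2H$, $C=X-Y$, observes that $\Theta^h_{\cE}=2\partial\phi_C$ and that an explicit normally ordered polynomial in the $\Theta$'s equals the Virasoro field $T_B$ of the $B$-direction, and concludes that anything commuting with $\Theta_{\cE}$ can depend on neither $C$ nor $B$, hence lies in the $A$-lattice algebra. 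That two-line elimination is what your plan replaces with the character identity.

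The caveat is that your decisive step is only announced, not performed: you write ``granting it'' for the character identity, and that identity carries the entire content of the theorem (as you yourself note, the zero-mode invariants such as $\epsilon_{ijk}:b^{x_i}b^{x_j}c^{x'_k}:$ survive a naive associated-graded count and must be killed by the higher-mode conditions). As a completed proof your text therefore has a gap exactly at the branching computation for $\gs\go(6)_1\downarrow\gs\gl(2)_4$; the central-charge count $3-\tfrac{12}{6}=1$ makes the answer plausible, but the computation still has to be done. If you want to keep your strategy, carry out that extraction explicitly; otherwise the paper's orthogonal-decomposition argument gets you there with far less bookkeeping.
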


\begin{proof} 
We use the above bosonization to express $\cE$ as a lattice vertex algebra $V_L$ where $L \cong \mathbb{Z}^3$, with generators $X,Y,H$. Let $\phi_X, \phi_Y, \phi_H$ be free bosons satisfying
$$\phi_X(z)\phi_X(w) \sim \ln(z-w), \qquad \phi_Y(z)\phi_Y(w) \sim \ln(z-w),\qquad \phi_H(z)\phi_H(w) \sim \ln(z-w).$$ We have an isomorphism $\cE \cong V_L$ given by
$$b^x \mapsto \Gamma_X ,\qquad  c^{x'} \mapsto  \Gamma_{-X},\qquad b^y \mapsto \Gamma_Y ,\qquad  c^{y'} \mapsto \Gamma_{-Y}, \qquad b^h \mapsto \Gamma_H,\qquad  c^{h'} \mapsto \Gamma_{-H}.$$
Under this isomorphism, 
\begin{equation*}
\begin{split}
\Theta^x_{\cE}\, &\mapsto \, -2 \Gamma_{X-H} +\Gamma_{H-Y},\\
\Theta^y_{\cE}\, &\mapsto \, 2 \Gamma_{Y-H}- \Gamma_{H-X}\,,\\
\Theta^h_{\cE}\, & \mapsto\, 2 \partial \phi_X-2\partial \phi_Y\,.
\end{split}
\end{equation*}
We change orthogonal bases as follows
\begin{equation*}
A\, =\, X+Y+H ,\quad \quad B\, =\, X+Y-2H ,\quad \quad C\,=\,X-Y\, .
\end{equation*}
Now, we see that the rank one lattice vertex algebra generated by $\Gamma_A$ and $\Gamma_{-A}$ lies in $\cE^{\gs\gl_2[t]}$, since
\begin{equation*}
\begin{split}
\Theta^x_{\cE}\, &=\, :\Gamma_{C/2}(-2 \Gamma_{B/2}+ \Gamma_{-B/2}):\,,\\
\Theta^y_{\cE}\, &=\, :\Gamma_{-C/2}(2 \Gamma_{B/2}- \Gamma_{-B/2}):\,,\\
\Theta^h_{\cE}\, &=\, 2 \partial \phi_C\,.
\end{split}
\end{equation*}
Moreover, any field depending on $C$ does not commute with $\Theta^h_{\cE}$. It remains to show that any field that depends on $B$ does not lie in $\cE^{\gs\gl_2[t]}$. We compute the normally ordered product
\begin{equation*}
\frac{1}{6}\bigl(:\Theta^x_{\cE}\Theta^y_{\cE}:-\frac{1}{2}\partial \Theta^h_{\cE}-\frac{1}{8}:\Theta^h_{\cE}\Theta^h_{\cE}:\bigr) \,=\, \frac{1}{12}:\partial \phi_B\partial \phi_B:\, = \, T_B\, .
\end{equation*}
Here $T_B$ is the Virasoro field of central charge one in the rank one lattice vertex algebra generated by $\Gamma_B$ and $\Gamma_{-B}$. Any field in $\cE^{\gs\gl_2[t]}$ must commute with $T_B$, and this is only true of fields not depending on $B$.
\end{proof}

Unfortunately, we cannot describe $\cW^{\gs\gl_2[t]}$ using any of our methods. In addition to the elements $v^x,v^y,v^h,F,C^{bbb},C^{ccc}$ given above, the elements
\begin{equation}\label{geniv} 
\begin{split}
 Q^{\gamma b}  &=\ :\gamma^{h'} b^h: + :\gamma^{x'} b^x: + :\gamma^{y'} b^y:, \ \ \ \ \ \ \ \ \ \ Q^{\beta b} =\ :\beta^h b^h:+ 2 :\beta^x b^y:+2:\beta^y b^x:,\\
 Q^{\gamma c}  &=\ :\gamma^{h'} c^{h'}: + \frac{1}{2}:\gamma^{x'} c^{y'}: +\frac{1}{2} :\gamma^{y'} c^{x'}:, \ \ \ \ \ \ \ \ \ \ Q^{\beta c} =\ :\beta^h c^{h'}: +  : \beta^x c^{x'}:+ : \beta^y c^{y'}:,
\end{split}
\end{equation}
are part of the $V_3(\go\gs\gp(2|2))$-structure. There are six additional elements of $\cW^{\gs\gl_2[t]}$ that we can write down using using Weyl's first fundamental theorem of invariant theory for $\gs\go_3$.
\begin{equation}\label{genvi}
\begin{split}
 G&=\  :\beta^h \gamma^{x'} c^{y'}: - :\beta^h \gamma^{y'} c^{x'}:  + 2 :\beta^x \gamma^{h'} c^{x'}: - 2 :\beta^x \gamma^{x'} c^{h'}:  - 2 :\beta^y \gamma^{h'} c^{y'}:  \\
 &\qquad + 2 :\beta^y \gamma^{y'} c^{h'}: -:b^h c^{x'} c^{y'}: +  2 :b^x c^{x'} c^{h'}: - 2 :b^y c^{y'} c^{h'}:,\\
\bar{G} &= \frac{1}{2} \big( -:\beta^h \gamma^{x'} b^x: + : \beta^h \gamma^{y'} b^y:  - 2 : \beta^x \gamma^{h'} b^y: + :\beta^x \gamma^{x'} b^h:+ 2 :\beta^y \gamma^{h'}  b^x: \\
 &\qquad - :\beta^y \gamma^{y'} b^h: + :b^x b^h c^{x'}: - 2 :b^x b^y c^{h'}:  - :b^y b^h c^{y'}: \big),\\
 C^{\gamma bb} &= - : \gamma^{h'} b^x b^y: + \frac{1}{2} :\gamma^{x'} b^x b^h:  - \frac{1}{2} :\gamma^{y'} b^y b^h:,\\
 C^{\beta bb} &=\ :\beta^h b^x b^y: + :\beta^x b^y b^h: - :\beta^y b^x b^h:,\\
 C^{\gamma cc} &= -:\gamma^{h'} c^{x'} c^{y'} : -:\gamma^{x'} c^{y'} c^{h'}: + :\gamma^{y'} c^{x'} c^{h'}:,\\ 
 C^{\beta cc} &=\ :\beta^h c^{x'} c^{y'}: - 2 :\beta^x c^{x'} c^{h'}: + 2 :\beta^y c^{y'} c^{h'}:.\
\end{split}
 \end{equation}

\begin{remark} $\cW$ coincides with the semi-infinite Weil complex of $\gs\gl_2$ (see \cite{FF}), and the zero mode $G(0)$ is the semi-infinite differential. \end{remark}

Recall that $\cS^{\gs\gl_2[t]}$ has a Virasoro element $L_{\cS^{\gs\gl_2[t]}} = L_{\cS} - \tau_{\cS}(L_{\text{Sug}})$ of central charge zero, which is given by
\begin{equation} \label{virs} L_{\cS^{\gs\gl_2[t]}} =\frac{1}{2}\big(4 :v^x v^y: + :v^h v^h: - \partial v^h\big).\end{equation} Note that $L_{\cS^{\gs\gl_2[t]}}$ is {\it not} the usual Sugawara vector for $V_{-3/2}(\gs\gl_2)$; we have 
\begin{equation*}
\begin{split}
 L_{\cS^{\gs\gl_2[t]}} (z) v^h(w) &\sim 3(z-w)^{-3} + v^h(w) (z-w)^{-2} + \partial v^h(w) (z-w)^{-1},\\
 L_{\cS^{\gs\gl_2[t]}} (z) v^y(w)  &\sim 2v^y(w) (z-w)^{-2} + \partial v^y(w) (z-w)^{-1},\\
 L_{\cS^{\gs\gl_2[t]}} (z) v^x(w) &\sim  \partial v^x(w) (z-w)^{-1},
 \end{split}
 \end{equation*}
which shows that $v^x$ and $v^y$ are primary of weights zero and two, and $v^h$ is quasi-primary of weight one. With this Virasoro element, $\cS^{\gs\gl_2[t]}\hookrightarrow \cW$ is a conformal embedding.

Let $\cA$ denote the subalgebra of $\cW^{\gs\gl_2[t]}$ generated by $v^x,v^y,v^h,F,C^{bbb},C^{ccc}$, together with the vertex operators \eqref{geniv} and \eqref{genvi}. We conjecture that $\cA$ is all of $\cW^{\gs\gl_2[t]}$, but we are unable to prove this at present. We will write down all nontrivial operator products among the generators of $\cA$, and we will see that $\cA$ is {\it strongly} generated by the above collection. We start with the operator products of $v^x, v^y, v^h, F, Q^{\gamma b}, Q^{\beta b}, Q^{\gamma c}, Q^{\beta c}$.
\begin{equation*} 
\begin{split}
 v^x(z) v^y(w) &\sim -\frac{3}{2}(z-w)^{-2} + v^h(w) (z-w)^{-1},\ \ \ \ \ \  v^h(z) v^h(w) \sim -3(z-w)^{-2},\\
 v^h(z) v^x(w) &\sim 2 v^x(w) (z-w)^{-1} ,\ \ \ \ \ v^h(z) v^y(w) \sim -2 v^y(w)(z-w)^{-1},\\
 F(z) F(w) &\sim 3 (z-w)^{-2},\\ 
 F(z) Q^{\gamma c}(w) &\sim Q^{\gamma c}(w)(z-w)^{-1},\ \ \ \ \ F(z) Q^{\gamma b}(w) \sim -Q^{\gamma b}(w)(z-w)^{-1},\\
 F(z) Q^{\beta c}(w) &\sim Q^{\beta c}(w)(z-w)^{-1},\ \ \ \ \ F(z) Q^{\beta b}(w) \sim -Q^{\beta b}(w)(z-w)^{-1},\\
 Q^{\gamma b} (z) Q^{\beta c} (w) &\sim -3 (z-w)^{-2}+ (v^h + F) (z-w)^{-1},\\
 Q^{\beta b} (z)Q^{\gamma c} (w) &\sim 3 (z-w)^{-2} + (v^h -F)(z-w)^{-1},\\
 Q^{\gamma b} (z) Q^{\gamma c} (w) &\sim 2 v^x (z-w)^{-1},\ \ \ \ \ Q^{\beta b} (z) Q^{\beta c} (w) \sim -2 v^y(z-w)^{-1},\\
v^h(z) Q^{\gamma b}(w) &\sim Q^{\gamma b}(w)(z-w)^{-1},\ \ \ \ \ v^y(z) Q^{\gamma b}(w) \sim -Q^{\beta b} (w)(z-w)^{-1},\\
v^h(z) Q^{\beta b} (w) &\sim -Q^{\beta b} (w)(z-w)^{-1},\ \ \ \ \ v^x(z) Q^{\beta b} (w) \sim -Q^{\gamma b}(w)(z-w)^{-1},\\
v^h(z) Q^{\gamma c}(w) &\sim Q^{\gamma c}(w)(z-w)^{-1},\ \ \ \ \  v^y(z) Q^{\gamma c}(w) \sim -Q^{\beta c} (w)(z-w)^{-1},\\
v^h(z) Q^{\beta c} (w) &\sim -Q^{\beta c} (w)(z-w)^{-1},\ \ \ \ \ v^x(z) Q^{\beta c} (w) \sim -Q^{\gamma c}(w)(z-w)^{-1}.
\end{split}
\end{equation*}
These fields obey the operator product relations of the affine vertex superalgebra $V_3(\go\gs\gp(2|2))$.
Next, $C^{\gamma bb}, C^{\beta bb}, C^{bbb}, \bar G$ transform under $V_3(\go\gs\gp(2|2))$ as follows:

\begin{equation*}\label{ospprimaryI}
\begin{split}
 v^h(z) C^{\gamma bb}(w) &\sim C^{\gamma bb}(w)(z-w)^{-1},\ \ \ \ \ v^y(z) C^{\gamma bb}(w) \sim C^{\beta bb}(w)(z-w)^{-1},\\
v^h(z) C^{\beta bb}(w) &\sim -C^{\beta bb}(w)(z-w)^{-1},\ \ \ \ \ v^x(z) C^{\beta bb}(w) \sim  C^{\gamma bb}(w)(z-w)^{-1},\\
F(z) C^{\gamma bb}(w) &\sim - 2C^{\gamma bb}(w)(z-w)^{-1},\ \ \ \ \ F(z) C^{\beta bb}(w) \sim - 2C^{\beta bb}(w)(z-w)^{-1},\\
F(z) C^{bbb}(w) &\sim - 3C^{bbb}(w)(z-w)^{-1},\ \ \ \ \ F(z) \bar{G}(w) \sim - \bar{G}(w)(z-w)^{-1},\\
Q^{\gamma b}(z)C^{\beta bb}(w) &\sim -3 C^{bbb}(w)(z-w)^{-1},\ \ \ \ \ Q^{\beta b}(z) C^{\gamma bb}(w) \sim -3 C^{bbb}(w)(z-w)^{-1},\\
Q^{\beta c}(z)C^{\gamma bb}(w) &\sim \bar{G}(w)(z-w)^{-1} ,\ \ \ \ \ Q^{\gamma c}(z)C^{\beta bb}(w) \sim \bar{G}(w)(z-w)^{-1},\\
\bar{G}(z) Q^{\beta b}(w) &\sim C^{\beta bb}(w) (z-w)^{-1},\ \ \ \ \ \bar{G}(z) Q^{\gamma b}(w) \sim -C^{\gamma bb}(w) (z-w)^{-1}.
\end{split}
\end{equation*}
The transformations of $C^{\gamma cc}, C^{\beta cc}, C^{ccc}, G$ under $V_3(\go\gs\gp(2|2))$ are given by
\begin{equation*}\label{ospprimaryII}
\begin{split}
 v^h(z) C^{\gamma cc}(w) &\sim C^{\gamma cc}(w)(z-w)^{-1},\ \ \ \ \  v^y(z) C^{\gamma cc}(w) \sim C^{\beta cc}(w)(z-w)^{-1},\\
v^h(z) C^{\beta cc}(w) &\sim -C^{\beta cc}(w)(z-w)^{-1},\ \ \ \ \ v^x(z) C^{\beta cc}(w) \sim  C^{\gamma cc}(w)(z-w)^{-1},\\
 F(z) C^{\gamma cc}(w) &\sim 2 C^{\gamma cc}(w)(z-w)^{-1},\ \ \ \ \ F(z) C^{\beta cc}(w) \sim 2 C^{\beta cc}(w)(z-w)^{-1},\\
 F(z) C^{ccc}(w) &\sim 3 C^{ccc}(w)(z-w)^{-1},\ \ \ \ \  F(z) G(w) \sim G(w)(z-w)^{-1},\\
Q^{\gamma c}(z)C^{\beta cc}(w) &\sim -3 C^{ccc}(w)(z-w)^{-1},\ \ \ \ \ Q^{\beta c}(z)C^{\gamma cc}(w) \sim -3 C^{ccc}(w)(z-w)^{-1},\\
Q^{\gamma b}(z)C^{\beta cc}(w) &\sim G(w)(z-w)^{-1},\ \ \ \ \ Q^{\beta b}(z)C^{\gamma cc}(w) \sim G(w)(z-w)^{-1},\\
G(z) Q^{\beta c}(w) &\sim C^{\beta cc}(w) (z-w)^{-1},\ \ \ \ \ G(z) Q^{\gamma c}(w) \sim -C^{\gamma cc}(w) (z-w)^{-1}.
\end{split}
\end{equation*}
Finally, the operator products of $C^{\gamma cc}, C^{\beta cc}, C^{ccc}, G$ with $C^{\gamma bb}, C^{\beta bb}, C^{bbb}, \bar G$ can be expressed in terms of fields in the image of $V_3(\go\gs\gp(2|2))$ as follows:
\begin{equation*}
 \begin{split}
C^{\gamma bb}(z)& C^{\gamma cc}(w) \sim -2 v^x(w)(z-w)^{-2} +\\
&\qquad\qquad\qquad \bigg(:Q^{\gamma b}(w) Q^{\gamma c}(w): + 2 :v^x(w) F(w): -2 \partial v^x(w) \bigg) (z-w)^{-1},\\
C^{\beta bb}(z)& C^{\beta cc}(w) \sim 2 v^y(w)(z-w)^{-2} +\\\
&\qquad\qquad\qquad \bigg(:Q^{\beta b}(w) Q^{\beta c}(w): -2 :v^y(w) F(w): +2 \partial v^y(w) \bigg) (z-w)^{-1},\\
C^{\gamma bb}(z)& C^{\beta cc}(w) \sim -3 (z-w)^{-3} + \big( v^h(w) + 2F(w)\big) (z-w)^{-2}-\\
&\bigg(:Q^{\beta b}(w) Q^{\gamma c}(w): +  :v^h(w) F(w): +\frac{1}{2}:F(w)F(w): -\frac{1}{2}\partial F(w)- \partial v^h(w) \bigg) (z-w)^{-1},\\
C^{\gamma cc}(z)& C^{\beta bb}(w) \sim -3 (z-w)^{-3} + \big( v^h(w) - 2F(w)\big) (z-w)^{-2}-\\
&\bigg(:Q^{\beta c}(w) Q^{\gamma b}(w): -  :v^h(w) F(w): +\frac{1}{2}:F(w)F(w): +\frac{1}{2}\partial F(w)- \partial v^h(w) \bigg) (z-w)^{-1},\\
\end{split}
\end{equation*}
\begin{equation*}
 \begin{split} 
&C^{\gamma cc}(z) C^{bbb}(w) \sim Q^{\gamma b}(w)(z-w)^{-2} + \big(:Q^{\gamma b}(w) F(w): \big) (z-w)^{-1},\\
&C^{\beta cc}(z) C^{bbb}(w) \sim -Q^{\beta b}(w)(z-w)^{-2} - \big(:Q^{\beta b}(w) F(w): \big) (z-w)^{-1},\\
&C^{\gamma bb}(z) C^{ccc}(w) \sim Q^{\gamma c}(w)(z-w)^{-2} - \big(:Q^{\gamma c}(w) F(w): \big) (z-w)^{-1},\\
\end{split}
\end{equation*}
\begin{equation*}
 \begin{split}
&C^{\beta bb}(z) C^{ccc}(w) \sim -Q^{\beta c}(w)(z-w)^{-2} + \big(:Q^{\beta c}(w) F(w): \big) (z-w)^{-1},\\
&C^{ccc}(z) C^{bbb}(w)  \sim -(z-w)^{-3} -F(w)(z-w)^{-2} -\frac{1}{2} \bigg(:F(w)F(w): + \partial F(w) \bigg)(z-w)^{-1},\\
&G(z) C^{bbb}(w) \sim - :Q^{\gamma b}(w) Q^{\beta b}(w): (z-w)^{-1},\\
&G(z) C^{\gamma bb}(w) \sim Q^{\gamma b}(w)(z-w)^{-2} + \big(: v^h Q^{\gamma b}: -2 :v^x Q^{\beta b}:~ -\partial Q^{\gamma b}\big)(z-w)^{-1},\\
 &G(z) C^{\beta bb}(w) \sim -Q^{\beta b} (w)(z-w)^{-2} + \big(2: v^y Q^{\gamma b}:~+~ :v^h Q^{\beta b} :~ + \partial Q^{\beta b} \big)(z-w)^{-1},\\
&\bar{G}(z) C^{ccc}(w) \sim :Q^{\beta c}(w) Q^{\gamma c}(w): (z-w)^{-1},\\
 &\bar{G}(z) C^{\gamma cc}(w) \sim  Q^{\gamma c} (z-w)^{-2} - \big(2 :v^x(w) Q^{\beta c}(w): -v^h(w) Q^{\gamma c}(w): + \partial Q^{\gamma c}(w) \big) (z-w)^{-1},\\
&\bar{G}(z) C^{\beta cc}(w) \sim  -Q^{\beta c} (z-w)^{-2} + \big(2 :v^y(w) Q^{\gamma c}(w): +v^h(w) Q^{\beta c}(w): + \partial Q^{\beta c}(w) \big) (z-w)^{-1},\\
\end{split}
\end{equation*}
\begin{equation}\label{gbarg}
 \begin{split}
&G(z) \bar{G}(w) \sim 3 (z-w)^{-3} + F(w)(z-w)^{-2} + \bigg(- 4 :v^x(w) v^y(w): + :v^h(w) v^h(w): \\
&\qquad + :Q^{\gamma b}(w)Q^{\beta c}(w): - :Q^{\beta b} (w)Q^{\gamma c}(w): + \frac{1}{2} :F(w) F(w): +2 \partial v^h(w) -\frac{1}{2} \partial F(w) \bigg) (z-w)^{-1}.
\end{split}
\end{equation}

\section{The Odake algebra structure}

Recall that $\cW^{\gs\gl_2[t]}$ has a conformal vector $L_{\cW^{\gs\gl_2[t]}} = L_{\cW} - \tau_{\cW}(L_{\text{Sug}})$ of central charge zero, which is given by $$L_{\cW^{\gs\gl_2[t]}} =\ :\beta^h \partial \gamma^{h'}: +:\beta^x \partial \gamma^x: + :\beta^y \partial \gamma^{y'}: - :b^h \partial c^{h'}: -:b^x \partial c^x: - :b^y \partial c^{y'}: $$ $$+ \frac{1}{4} \big( :\Theta^x_{\cW} \Theta^y_{\cW}: + :\Theta^y_{\cW} \Theta^x_{\cW}: + \frac{1}{2}  :\Theta^h_{\cW} \Theta^h_{\cW}: \big).$$ In fact, $L_{\cW^{\gs\gl_2[t]}}$ lies in $\cA$ and hence can be realized as a normally ordered polynomial in the generators of $\cA$ as follows: 
$$L_{\cW^{\gs\gl_2[t]}}  = G\circ_0 \bar{G} + L_{\cS^{\gs\gl_2[t]}},$$ where $G\circ_0 \bar{G}$ is given by (\ref{gbarg}) and $L_{\cS^{\gs\gl_2[t]}}$ is given by (\ref{virs}). Define 
\begin{equation*} 
L = G\circ_0 \bar{G} - \frac{1}{2} \partial F = L_{\cW^{\gs\gl_2[t]}}  -  L_{\cS^{\gs\gl_2[t]}}  - \frac{1}{2} \partial F .
\end{equation*} 
It is straightforward to check that $L$ is a Virasoro element in $\cC(\gs\gl_2,\mathbb{C}^3)$ with central charge $9$, $G, \bar{G}$ are both primary of weight $\frac{3}{2}$ with respect to $L$, and $F$ is primary of weight $1$. Moreover, \eqref{n=2svir} is satisfied, so $G,\bar{G}, F, L$ represent a copy of the $N=2$ superconformal vertex algebra inside $\cC(\gs\gl_2,\mathbb{C}^3)$. Finally, we define elements
\begin{equation*} 
\begin{split}
X &= C^{ccc},\qquad\qquad\qquad\qquad\ \ \ \ \ \ \ \ \ \ \ \ \bar{X} = C^{bbb},\\
 Y &= \frac{1}{2} \bar{G}(0)(X) = \frac{1}{2} :Q^{\beta c} Q^{\gamma c}:,\ \ \ \ \ \ \ \  \bar{Y} = \frac{1}{2} G(0)(\bar{X}) = -\frac{1}{2} :Q^{\gamma b} Q^{\beta b}:.
\end{split} 
 \end{equation*}
It is straightforward to check that $X,\bar{X}, Y, \bar{Y}$ lie in $\cC(\gs\gl_2,\mathbb{C}^3)$ and are primary of weights $\frac{3}{2}, \frac{3}{2}, 2,2$ with respect to $L$.

\begin{lemma} The fields $F, L, G, \bar{G}, X, \bar{X}, Y, \bar{Y}$ satisfy the operator product relations of Odake's algebra $\cO$, as well as the normally ordered polynomial relations \eqref{odakerelations}.
\end{lemma}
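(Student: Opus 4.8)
The plan is to verify every relation by direct computation in the free-field realization, reducing each operator product and each normally ordered identity to the operator products among the generators of $\cA$ tabulated in the previous section. The dictionary is $X = C^{ccc}$, $\bar X = C^{bbb}$, $Y = \frac12 :Q^{\beta c} Q^{\gamma c}:$, $\bar Y = -\frac12 :Q^{\gamma b} Q^{\beta b}:$, together with the given expressions for $F$, $G$, $\bar G$ and $L = G\circ_0 \bar G - \frac12 \partial F$. Since all eight fields are normally ordered polynomials in the generators $v^x, v^y, v^h, F, Q^{\beta b}, Q^{\beta c}, Q^{\gamma b}, Q^{\gamma c}, G, \bar G, C^{\gamma bb}, C^{\beta bb}, C^{bbb}, C^{\gamma cc}, C^{\beta cc}, C^{ccc}$, whose mutual operator products are already known, each computation is a finite application of the operator product calculus, organized by $F$-charge and conformal weight.

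Several of the required relations are immediate. The $N=2$ relations \eqref{n=2svir} and the primary-weight data for $G, \bar G, F, L, X, \bar X, Y, \bar Y$ have already been recorded as consequences of \eqref{gbarg} and the explicit form of $L$. Under the dictionary, $X(z)\bar X(w) = C^{ccc}(z) C^{bbb}(w)$ reproduces the $X$--$\bar X$ operator product verbatim; $F(z)X(w)$, $F(z)\bar X(w)$, $F(z) Y(w)$, $F(z)\bar Y(w)$ follow from the $F$-charges $\pm 3, \pm 2$; and from the table $G(z) C^{bbb}(w) \sim -:Q^{\gamma b} Q^{\beta b}:(z-w)^{-1} = 2\bar Y(w)(z-w)^{-1}$ together with its mirror $\bar G(z) C^{ccc}(w) \sim :Q^{\beta c} Q^{\gamma c}:(z-w)^{-1} = 2Y(w)(z-w)^{-1}$ give the actions of $G, \bar G$ on $\bar X, X$.

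The substantive work is the family of operator products involving $Y$, $\bar Y$, and $L$, each of which is itself a normally ordered quadratic in the generators. For these I would apply the noncommutative Wick formula, rewriting products such as $Y(z)\bar Y(w)$, $X(z)\bar Y(w)$, $\bar X(z) Y(w)$, and the actions of $G, \bar G$ on $Y, \bar Y$ as iterated circle products of the generators, keeping only the tabulated contractions among the $Q$'s, $v$'s, $F$, $G$, $\bar G$, and then reassembling the result into normal form. This is where I expect the main obstacle: the single and double contractions produce many intermediate terms, and matching them against the target expressions, which contain composite fields such as $:G\bar G:$, $:LF:$, $\partial L$ and $\partial(:FF:)$, requires repeated use of the reordering and associativity identities for the Wick product. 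The computation is purely formal and terminates because every pair of fields quantum-commutes with bounded pole order; in practice the $F$-charge and weight bookkeeping limits the admissible terms on each side and makes the matching tractable. The vanishing products $G(z)X(w)$, $\bar G(z)\bar X(w)$, $G(z)\bar Y(w)$, $\bar G(z) Y(w)$, $X(z) Y(w)$, $\bar X(z)\bar Y(w)$ are handled the same way.

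Finally I would verify the normally ordered relations \eqref{odakerelations}. The identities $\partial X = :FX:$ and $\partial\bar X = -:F\bar X:$ become $\partial C^{ccc} = :F C^{ccc}:$ and $\partial C^{bbb} = -:F C^{bbb}:$, which are transparent in the bosonization of Theorem \ref{commbc}: there $C^{ccc}$ and $C^{bbb}$ are, up to a cocycle scalar, the lattice vertex operators $\Gamma_{-A}$ and $\Gamma_A$ with $A = X+Y+H$, while $F = -\partial\phi_A$, so the relations are the standard $\partial\Gamma_{\pm A} = \pm :(\partial\phi_A)\Gamma_{\pm A}:$. For $:YY: = 0$ and $:\bar Y\bar Y: = 0$ I would argue structurally: forming $:YY: = Y\circ_{-1} Y$ introduces only $\beta$--$\gamma$ contractions, which are simple poles and hence contribute no derivatives to the ghost factors, so every resulting monomial is quartic in the three undifferentiated anticommuting fields $c^{x'}, c^{y'}, c^{h'}$. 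Such a monomial necessarily repeats a flavor, and since each $c^{\bullet'}$ has regular operator product with every other factor present, the two equal factors may be brought together up to sign and annihilate via $:c^{\bullet'} c^{\bullet'}: = 0$. The identity $:\bar Y\bar Y: = 0$ follows identically with $b^x, b^y, b^h$ in place of the $c$'s.
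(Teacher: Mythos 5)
Your overall strategy coincides with the paper's: the authors dispose of this lemma with the single sentence ``this is a straightforward calculation,'' and your plan of reducing every relation to the tabulated operator products among the generators of $\cA$, organized by $F$-charge and conformal weight and closed up with the noncommutative Wick formula, is exactly the computation they intend. Your specific identifications $G\circ_0 \bar X = -:Q^{\gamma b}Q^{\beta b}: = 2\bar Y$, $\bar G\circ_0 X = :Q^{\beta c}Q^{\gamma c}: = 2Y$, the reading-off of $X(z)\bar X(w)$ from the $C^{ccc}(z)C^{bbb}(w)$ entry, and the lattice-vertex-operator derivation of $\partial X = \ :FX:$ and $\partial \bar X = -:F\bar X:$ from the bosonization in Theorem \ref{commbc} are all correct.

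There is, however, a genuine error in your argument for $:YY:\ = 0$ and $:\bar Y\bar Y:\ = 0$. You assert that the $\beta$--$\gamma$ contractions, being simple poles, ``contribute no derivatives to the ghost factors,'' so that every monomial of $:YY:$ is quartic in the undifferentiated fields $c^{x'},c^{y'},c^{h'}$. This is false: the Wick product of composite fields is only quasi-associative, and a simple-pole contraction between a $\beta$ in one factor and a $\gamma$ in the other produces a correction term in which a derivative falls on the surviving constituent of the first factor. Already $Y = \frac{1}{2}:Q^{\beta c}Q^{\gamma c}:$, expanded into fully normally ordered free-field monomials, contains the nonzero corrections $\frac{1}{2}\bigl(:(\partial c^{h'})c^{h'}: + \frac{1}{2}:(\partial c^{x'})c^{y'}: + \frac{1}{2}:(\partial c^{y'})c^{x'}:\bigr)$. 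Consequently $:YY:$ contains monomials such as $:(\partial c^{x'})c^{y'}(\partial c^{y'})c^{x'}:$, which are quartic in the $c$'s but in which the repeated flavors carry distinct derivative orders, so your pigeonhole-plus-anticommutativity argument does not apply to them. The identity is still true, but it needs a different justification; the cleanest is to check that $Q^{\beta c}$ and $Q^{\gamma c}$ are odd fields with $Q^{\beta c}\circ_n Q^{\gamma c} = Q^{\beta c}\circ_n Q^{\beta c} = Q^{\gamma c}\circ_n Q^{\gamma c} = 0$ for all $n\geq 0$ (the would-be simple pole in $Q^{\beta c}(z)Q^{\gamma c}(w)$ cancels between the $x'$ and $y'$ terms and vanishes on the diagonal because $:c^{h'}c^{h'}:\ = 0$). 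All quasi-associativity and commutativity corrections among such fields vanish, so their Wick products form a supercommutative associative algebra and $:YY:\ = \frac{1}{4}:Q^{\beta c}Q^{\gamma c}Q^{\beta c}Q^{\gamma c}:\ = -\frac{1}{4}:(:Q^{\beta c}Q^{\beta c}:)(:Q^{\gamma c}Q^{\gamma c}:):\ = 0$, since an odd field all of whose nonnegative self-products vanish squares to zero under $\circ_{-1}$ by skew-symmetry. The same argument handles $\bar Y$ with $Q^{\gamma b}, Q^{\beta b}$.
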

\begin{proof} This is a straightforward calculation. \end{proof}

\begin{lemma}\label{reformlem} We have an isomorphism of vertex algebras $$\cC(\gs\gl_2,\mathbb{C}^3) \cong \text{Com}(V_0(\gs\gl_2), V_{-4}(\gs\gl_2) \otimes \cE).$$
\end{lemma}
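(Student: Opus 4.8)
The plan is to peel off the $\beta\gamma$-factor of $\cW$ using the Howe pair structure of $\cS$, and then re-read the remaining gauge-invariants as a commutant inside $V_{-4}(\gs\gl_2)\otimes\cE$. First I would record the relevant levels. Since $V=\mathbb{C}^3$ is the adjoint module, the form $B(\xi,\eta)=\text{tr}_V(\rho(\xi)\rho(\eta))$ is the Killing form, which is $4$ times the normalized form of $\gs\gl_2$ (e.g. $B(h,h)=\text{tr}(\text{ad}(h)^2)=8$). Hence $\tau_\cS$ realizes $V_{-1}(\gg,B)=V_{-4}(\gs\gl_2)$ as the subalgebra $\Theta_\cS\subset\cS$, and $\tau_\cE$ realizes $V_1(\gg,B)=V_4(\gs\gl_2)$ as $\Theta_\cE\subset\cE$. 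I would invoke that $\tau_\cS$ is injective, so that $\Theta_\cS\cong V_{-4}(\gs\gl_2)$ as an abstract vertex algebra; this is part of the statement, established in \cite{L} and \cite{LL}, that $\cS^{\gs\gl_2[t]}$ and $\Theta_\cS$ form a Howe pair inside $\cS$.

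Second, I would compute $\text{Com}(\cS^{\gs\gl_2[t]},\cW)$. Since $\cS^{\gs\gl_2[t]}$ lives in the tensor factor $1\otimes\cS$ and $\cE$ super-commutes with all of $\cS$, the commutant of $\cS^{\gs\gl_2[t]}$ in $\cW=\cE\otimes\cS$ factors as $\cE\otimes\text{Com}(\cS^{\gs\gl_2[t]},\cS)$. Indeed, writing a general element as $\sum_i a_i\otimes s_i$ with the $a_i$ linearly independent, the requirement that it circle-commute with every $1\otimes b$ forces each $s_i$ into $\text{Com}(\cS^{\gs\gl_2[t]},\cS)$. By the $\cS$-Howe pair, $\text{Com}(\cS^{\gs\gl_2[t]},\cS)=\Theta_\cS\cong V_{-4}(\gs\gl_2)$, so $\text{Com}(\cS^{\gs\gl_2[t]},\cW)\cong\cE\otimes V_{-4}(\gs\gl_2)$.

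Third, I would intersect with the gauge-invariants. Using $\cW^{\gs\gl_2[t]}=\text{Com}(\Theta_\cW,\cW)$, we have
\[ \cC(\gs\gl_2,\mathbb{C}^3)=\text{Com}(\cS^{\gs\gl_2[t]},\cW^{\gs\gl_2[t]})=\text{Com}(\cS^{\gs\gl_2[t]},\cW)\cap\cW^{\gs\gl_2[t]}=(\cE\otimes\Theta_\cS)^{\gs\gl_2[t]}, \]
the invariants under the diagonal action $\Theta_\cW^\xi=\Theta_\cE^\xi+\Theta_\cS^\xi$, which preserves $\cE\otimes\Theta_\cS$ because $\Theta_\cS$ is a subalgebra and the adjoint self-action of $V_{-4}(\gs\gl_2)$ is internal. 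Under $\Theta_\cS\cong V_{-4}(\gs\gl_2)$, the operator $\Theta_\cS^\xi\circ_n(-)$ becomes $X^\xi\circ_n(-)$, so the diagonal currents $\Theta_\cW^\xi$ correspond to $X^\xi\otimes 1+1\otimes\Theta_\cE^\xi$ in $V_{-4}(\gs\gl_2)\otimes\cE$. The second-order poles cancel since $-4+4=0$, so these generate a copy of $V_0(\gs\gl_2)$, and invariance under this diagonal $\gg[t]$ is precisely $\text{Com}(V_0(\gs\gl_2),V_{-4}(\gs\gl_2)\otimes\cE)$, giving the asserted isomorphism.

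The main obstacle is the input that $\cS^{\gs\gl_2[t]}$ and $\Theta_\cS$ are mutual commutants in $\cS$ with $\Theta_\cS\cong V_{-4}(\gs\gl_2)$; granting this, the remaining steps are formal, relying only on the tensor factorization of commutants and on the additivity of levels under the diagonal embedding. I would cite \cite{L} and \cite{LL} for this input; alternatively it can be verified directly from the explicit generators $v^x,v^y,v^h$ of $\cS^{\gs\gl_2[t]}$ together with a classical invariant-theory or character argument for the $\gg[t]$-action on $\cS$.
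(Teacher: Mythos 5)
Your proposal is correct and follows essentially the same route as the paper: both rest on the Howe pair $(\Theta_{\cS},\cS^{\gs\gl_2[t]})$ in $\cS$ with $\Theta_{\cS}\cong V_{-4}(\gs\gl_2)$, the tensor factorization $\text{Com}(\cS^{\gs\gl_2[t]},\cW)=\Theta_{\cS}\otimes\cE$, and the identification of the diagonal currents as a level-zero copy of $\gs\gl_2$. The only cosmetic difference is that the paper phrases the last step as an iterated commutant $\text{Com}(V_0(\gs\gl_2)\otimes\cS^{\gs\gl_2[t]},\cW)=\text{Com}(V_0(\gs\gl_2),\text{Com}(\cS^{\gs\gl_2[t]},\cW))$ where you phrase it as an intersection with $\cW^{\gs\gl_2[t]}$; these are equivalent.
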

\begin{proof} Recall that $\Theta_{\cS}$ and $\cS^{\gs\gl_2[t]}$ form a Howe pair inside $\cS$, and that $\Theta_{\cS}$ is isomorphic to $V_{-4}(\gs\gl_2)$. It follows that $$\cC(\gs\gl_2,\mathbb{C}^3) =\text{Com}(V_0(\gs\gl_2) \otimes \cS^{\gs\gl_2[t]}, \cW)  = \text{Com}(V_0(\gs\gl_2) ,\text{Com}(\cS^{\gs\gl_2[t]}, \cW)) $$ $$ =  \text{Com}(V_0(\gs\gl_2) , \Theta_{\cS} \otimes \cE) = \text{Com}(V_0(\gs\gl_2) ,V_{-4} (\gs\gl_2) \otimes \cE).$$
\end{proof}

For the sake of illustration, we rewrite the generators $L, G, \bar{G}, Y, \bar{Y}$ in terms of the generators $X^x, X^y, X^h$ of $V_{-4}(\gs\gl_2)$.

\begin{equation*} 
\begin{split}
L&= - \frac{1}{2} :X^x X^y: - \frac{1}{2} :X^y X^x: - \frac{1}{4} :X^h X^h:  - \frac{1}{2} :X^x \Theta^y_{\cE}:  - 
 \frac{1}{2} :X^y \Theta^x_{\cE}: - \frac{1}{4} :X^h \Theta^h_{\cE}:\\  &\qquad + \frac{1}{2} :FF: + \partial F - 2 L_{\cE},\\
G&=\  :X^x c^{x'}: +:X^y c^{y'}: +:X^h c^{h'}:  + \frac{1}{2}(:\Theta^x_{\cE} c^{x'}: +:\Theta^y_{\cE} c^{y'}: +:\Theta^h_{\cE} c^{h'}: ),\\
  \bar{G}& = -\frac{1}{2} (:X^x b^y: + :X^y b^x: + \frac{1}{2} :X^h b^h: ) -\frac{1}{4}(:\Theta^x_{\cE} b^y: + :\Theta^y_{\cE} b^x: + \frac{1}{2} :\Theta^h_{\cE} b^h: ),\\
   Y& = -\frac{1}{8}(: X^h c^{x'} c^{y'}: - 2:X^x c^{x'} c^{h'}: + 2 :X^y c^{y'} c^{h'}:) - \\
   &\qquad\frac{1}{16} (: \Theta^h_{\cE} c^{x'} c^{y'}: - 2:\Theta^x_{\cE} c^{x'} c^{h'}: + 2 :\Theta^y_{\cE} c^{y'} c^{h'}:),\\
   \bar{Y}& = \frac{1}{2}(: X^h b^{x} b^{y}: + :X^x b^{y} b^{h}: - :X^y b^{x} b^{h}:) + \frac{1}{4} (: \Theta^h_{\cE} b^{x} b^{y}: + :\Theta^x_{\cE} b^{y} b^{h}: - :\Theta^y_{\cE} b^{x} b^{h}:).
   \end{split}
   \end{equation*}

For an arbitrary $k\in \mathbb{C}$, we may consider the commutant $$\cC_k=\text{Com}(V_{k+4}(\gs\gl_2), V_{k}(\gs\gl_2) \otimes \cE),$$ where the generators of $V_k(\gs\gl_2)$ and $V_{k+4}(\gs\gl_2)$ are $X^{\xi}$ and $X^{\xi}+ \Theta^{\xi}_{\cE}$, respectively, for $\xi = x,y,h$. Then $\cC(\gs\gl_2,\mathbb{C}^3) \cong \cC_{-4}$. This deformation of $\cC(\gs\gl_2,\mathbb{C}^3)$ is a special case of a construction that is well known in the physics literature (see \cite{BFH}), and we have $$\lim_{k\ra \infty} \cC_k \cong \cE^{SL_2}.$$ Moreover, there is a linear map $\cC_k \ra \cE^{SL_2}$ defined as follows. Each element $\omega \in \cC_k$  of weight $d$ can be written uniquely in the form $\omega = \sum_{r=0}^d \omega_r$ where $\omega_r$ lies in the space \begin{equation*} \label{shapemon}(V_k(\gs\gl_2) \otimes \cE)^{(r)}\end{equation*} spanned by terms of the form $\alpha \otimes \nu$ where $\alpha \in V_k(\gs\gl_2)$ has weight $r$. Clearly $\omega_0 \in \cE^{SL_2}$ so we have a well-defined linear map \begin{equation*} \label{limitmap} \phi_k: \cC_k \ra \cE^{SL_2},\ \ \ \ \ \ \omega \mapsto \omega_0.\end{equation*} Note that $\phi_k$ is not a vertex algebra homomorphism for any $k$. By the same argument as Lemmas 8.3 and 8.4 of \cite{CL}, $\phi_k$ is a linear isomorphism whenever $V_k(\gs\gl_2)$ is a simple vertex algebra. The values of $k$ for which $V_k(\gs\gl_2)$ is simple were determined by Kac-Gorelik in \cite{GK}, and in particular $V_{-4}(\gs\gl_2)$ is simple. We conclude that $\cC(\gs\gl_2,\mathbb{C}^3)$ has the same graded character as $\cE^{SL_2}$.

We are now in a position to prove our main result.

\begin{thm} \label{mainodake} $\cC(\gs\gl_2,\mathbb{C}^3)$ is strongly generated by the fields $F, L, G, \bar{G}, X, \bar{X}, Y, \bar{Y}$, and is isomorphic to Odake's algebra $\cO$.
\end{thm}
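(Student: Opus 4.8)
The plan is to exhibit the eight fields as generating a copy of $\cO$ inside $\cC(\gs\gl_2,\mathbb{C}^3)$ and then to use a character count to prove that this copy is everything. First I would invoke the preceding lemma, which verifies that $F,L,G,\bar G,X,\bar X,Y,\bar Y$ satisfy precisely the operator products of $\cO$ together with the normally ordered relations \eqref{odakerelations}. Because $\cO$ is strongly generated by these eight fields, with all relations among them generated by \eqref{odakerelations} and the operator products (this completeness of relations is exactly what Odake's decomposition \eqref{eq:odakedecomp} and his character formula guarantee), the assignment of generators extends to a vertex algebra homomorphism $\Phi\colon\cO\to\cC(\gs\gl_2,\mathbb{C}^3)$. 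Since $\Phi(F)=F\neq 0$ and $\cO$ is simple, $\Phi$ is injective. Writing $\cO'=\Phi(\cO)$ for the subalgebra generated by the eight fields, we then have $\cO'\cong\cO$, so $\cO'$ is strongly generated by these fields, and it remains only to prove the reverse inclusion $\cO'=\cC(\gs\gl_2,\mathbb{C}^3)$; this yields simultaneously the strong generation statement and the isomorphism.

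For that reverse inclusion I would compare bigraded characters with respect to $q^{L(0)}z^{F(0)}$. We have already established, via the deformation $\cC_k$ and the linear isomorphism $\phi_{-4}$ (valid because $V_{-4}(\gs\gl_2)$ is simple), that $\cC(\gs\gl_2,\mathbb{C}^3)$ has the same graded character as $\cE^{SL_2}$. Since $\cO'\subseteq\cC(\gs\gl_2,\mathbb{C}^3)$ and each bigraded piece is finite dimensional, it suffices to show that $\text{ch}[\cO]=\text{ch}[\cE^{SL_2}]$: equality of these characters, combined with the inclusion, forces $\cO'=\cC(\gs\gl_2,\mathbb{C}^3)$ degree by degree, and then the chain $\text{ch}[\cO']=\text{ch}[\cO]=\text{ch}[\cE^{SL_2}]=\text{ch}[\cC(\gs\gl_2,\mathbb{C}^3)]$ closes the argument.

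The remaining task, and the main obstacle, is the explicit computation of the character of $\cE^{SL_2}$ and its identification with Odake's formula for $\text{ch}[\cO]$. Here I would bosonize $\cE$ as the rank three lattice vertex algebra $V_L$ appearing in the proof of Theorem \ref{commbc}, track the conformal weight coming from $L=L_{\cW^{\gs\gl_2[t]}}-L_{\cS^{\gs\gl_2[t]}}-\tfrac12\partial F$ and the $U(1)$-charge coming from $F$, and extract the $SL_2$-invariants by Weyl integration over the maximal torus of $SL_2$ acting in the adjoint representation on the three pairs of fermions. The expected output is a product of oscillator factors $\prod_{n\ge 1}(1-q^n)^{-2}$ and half-integral fermionic factors $\prod_{n\ge 1}(1+zq^{n-1/2})(1+z^{-1}q^{n-1/2})$, together with a theta-type lattice sum coming from the single invariant direction; matching this against the stated character of $\cO$ reduces to a Jacobi-triple-product manipulation. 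I expect the bookkeeping in this Weyl-integral/theta-function identification to be the genuinely delicate part, whereas the homomorphism and simplicity step are immediate once the lemma is in hand.
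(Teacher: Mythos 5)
Your overall strategy --- the OPE lemma, simplicity of $\cO$, the character identity $\text{ch}[\cC(\gs\gl_2,\mathbb{C}^3)] = \text{ch}[\cE^{SL_2}]$ obtained from the deformation $\cC_k$ and the linear isomorphism $\phi_{-4}$, and finally the computation $\text{ch}[\cE^{SL_2}] = \text{ch}[\cO]$ by bosonizing $\cE$, extracting $SL_2$-invariants, and applying the Jacobi triple product --- is exactly the paper's proof; the paper performs your ``Weyl integration'' simply by taking the difference of the $w^0$ and $w^2$ coefficients of $\text{ch}[\cE](z;w;q)$.

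The one step that does not go through as written is the construction of the homomorphism $\Phi\colon\cO\to\cC(\gs\gl_2,\mathbb{C}^3)$. You justify its existence by asserting that all relations in $\cO$ are generated by the listed operator products together with \eqref{odakerelations}, and that this completeness of relations is guaranteed by Odake's decomposition \eqref{eq:odakedecomp} and his character formula. Those results describe $\cO$ as a module over the $N=2$ algebra and compute its character; they do not furnish a presentation of $\cO$ by generators and relations, so a map \emph{out of} $\cO$ is not justified this way. The standard repair (and what the paper's terse ``any vertex algebra with the same generators, operator product relations, and graded character must be isomorphic to $\cO$'' implicitly uses) runs the map in the opposite direction: both $\cO$ and the subalgebra $\cO'\subseteq\cC(\gs\gl_2,\mathbb{C}^3)$ generated by the eight fields are quotients of the universal vertex algebra $\cU$ attached to these generators and operator products; since $\cU$ is $\tfrac{1}{2}\Z_{\geq 0}$-graded by conformal weight with one-dimensional weight-zero piece, it has a unique simple graded quotient, which by simplicity must be $\cO$. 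Hence there is a surjection $\cO'\twoheadrightarrow\cO$, giving $\text{ch}[\cO']\geq\text{ch}[\cO]$ coefficientwise; combined with $\text{ch}[\cO']\leq\text{ch}[\cC(\gs\gl_2,\mathbb{C}^3)]=\text{ch}[\cE^{SL_2}]=\text{ch}[\cO]$ this forces $\cO'=\cC(\gs\gl_2,\mathbb{C}^3)$ and $\cO'\cong\cO$, closing your chain of equalities without ever needing a presentation of $\cO$.
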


\begin{proof} Since $\cO$ is a simple vertex algebra, any vertex algebra with the same generators, operator product relations, and graded character must be isomorphic to $\cO$. Since $\cC(\gs\gl_2,\mathbb{C}^3)$ contains a subalgebra with the same generators and operator product relations as $\cO$, and $\cC(\gs\gl_2,\mathbb{C}^3)$ has the same graded character as $\cE^{SL_2}$, it suffices to show that the graded characters of $\cE^{SL_2}$ and $\cO$ coincide. First, note that $\cE$ is triply graded by conformal weight and the eigenvalues of $F(0)$ and $\Theta^{h}_{\cE}(0)$, and 

$$\text{ch}[\cE](z;w;q)=\text{tr}_{\cE}(q^{L(0)}z^{F(0)}w^{\Theta^h_{\cE}(0)} ) $$ 
\begin{equation*}
\begin{split}
&=\prod_{n= 1}^{\infty} (1+q^{n- \frac{1}{2}} z w^2) (1+q^{n-\frac{1}{2}} z^{-1} w^2)(1+q^{n-\frac{1}{2}} z w^{-2})(1+q^{n- \frac{1}{2}} z^{-1} w^{-2})(1+q^{n- \frac{1}{2}} z )(1+q^{n- \frac{1}{2}} z^{-1})\\
&=\prod_{n= 1}^{\infty}\frac{(1+zq^{n-\frac{1}{2}})(1+z^{-1}q^{n-\frac{1}{2}})}{(1-q^n)^2}\sum_{m,s\in\mathbb Z}z^{m+s}w^{2(m-s)}q^{m^2/2+s^2/2}.
\end{split}
\end{equation*}
For the last equality, we used Jacobi's triple product formula. 
Then $\text{ch}[\cE^{SL_2}](z;q)$ is obtained from $\text{ch}[\cE](z;w;q)$ by taking the difference between the coefficients of $w^0$ and $w^2$, namely
\begin{equation*}
\begin{split}
\text{ch}[\cE^{SL_2}](z;q)&=
\prod_{n=1}^\infty \frac{(1+zq^{n-\frac{1}{2}})(1+z^{-1}q^{n-\frac{1}{2}})}{(1-q^n)^2}\sum_{m\in\mathbb Z}q^{m^2}z^{2m}-q^{m^2+m+\frac{1}{2}}z^{2m+1}
=\text{ch}[\cO](z;q).
\end{split}
\end{equation*} 
 \end{proof}

Finally, combining Theorems \ref{howepair}, \ref{spiece}, and \ref{mainodake}, we see that $V_{-3/2}(\gs\gl_2)$ and $\cO$ form a Howe pair inside $\cW^{\gs\gl_2[t]}$.

\section{The case where $\gg= \gs\gl_2$ and $V$ is the standard module}
In this section, we consider $\cE^{\gg[t]}$, $\cS^{\gg[t]}$, $\cW^{\gg[t]}$, and $\cC(\gg,V)$ in the simplest case where $\gg = \gs\gl_2$ and $V$ is the standard module $\mathbb{C}^2$. In this case, a description of $\cS^{\gg[t]}$, $\cE^{\gg[t]}$, and $\cW^{\gg[t]}$ appears in \cite{LSS}.

\begin{thm} Let $V$ the standard $\gs\gl_2$-module $\mathbb{C}^2$.  
\begin{enumerate} 
 \item $\cS^{\gs\gl_2[t]}$ is a rank one Heisenberg algebra with generator
 $:\beta^1 \gamma^1: +:\beta^2\gamma^2:$.

\item $\cE^{\gs\gl_2[t]}$ is isomorphic to the irreducible quotient $L_1(\gs\gl_2)$ of the affine vertex algebra $V_1(\gs\gl_2)$, and has 
generators $:b^1 c^1: + :b^2 c^2:, :b^1 b^2:$, and $:c^1 c^2:$.

\item $\cW^{\gs\gl_2[t]}$ is a homomorphic image of
$V_1(\gs\gl(2|1))$ with generators
\begin{equation}\label{sl21}
\begin{split}
H &=\ :\beta^1 \gamma^1: + :\beta^2\gamma^2:, \\
F&=-:b^1 c^1: - :b^2 c^2:,\quad
E^+=\ :b^1 b^2:,\quad
E^-=\ :c^1 c^2:,\\
Q_1^-&=\ :\beta^1 c^1: + :\beta^2 c^2:,\quad
Q_1^+=\ :b^1 \gamma^1: + :b^2\gamma^2:,\\
Q_2^+&=\ :b^1 \beta^2: - :b^2 \beta^1:,\quad
Q_2^-=\ :\gamma^1 c^2: - :\gamma^2 c^1:.
\end{split}
\end{equation}

\end{enumerate}
\end{thm}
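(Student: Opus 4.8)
The plan is to treat the three parts uniformly: first exhibit generators, then compute their operator products, and finally---the real content---prove that the listed fields exhaust the invariant space. For the first two steps I would write each proposed generator as a normally ordered quadratic in $\beta^i,\gamma^i,b^i,c^i$ and check directly that $\Theta^\xi_\cS$, $\Theta^\xi_\cE$, or $\Theta^\xi_\cW$ has no singular operator product with it. Since every listed generator carries no derivative, only the first-order pole---the zero mode, i.e.\ ordinary $\gs\gl_2$-invariance---can contribute, and each generator is manifestly an $\gs\gl_2$-invariant contraction or determinant, so invariance is immediate. Computing the operator products among the generators is then a finite calculation: in (1) one finds $H(z)H(w)\sim -2(z-w)^{-2}$, a rank one Heisenberg algebra; in (2) the three fields $:b^1c^1:+:b^2c^2:$, $:b^1b^2:$, $:c^1c^2:$ close into $\gs\gl_2$ at level one; and in (3) the eight fields of \eqref{sl21} reproduce the relations of $V_1(\gs\gl(2|1))$, whose even part $\gl_2$ is spanned by $H,F,E^\pm$ and whose odd part is spanned by $Q_1^\pm,Q_2^\pm$. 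This yields homomorphisms from the Heisenberg algebra, $V_1(\gs\gl_2)$, and $V_1(\gs\gl(2|1))$ into the three invariant spaces.

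The substance is to prove these homomorphisms are onto, i.e.\ that there are no further invariants. For this I would pass to the associated graded with respect to the standard good increasing filtration, under which $\text{gr}\,\cS \cong \text{Sym}\big(\bigoplus_{j\geq0}(V_j\oplus V_j^*)\big)$ and $\text{gr}\,\cE \cong \bigwedge\big(\bigoplus_{j\geq0}(V_j\oplus V_j^*)\big)$, where $V_j\cong V=\mathbb{C}^2$ and $V_j^*\cong V^*$ carry $\partial^j\beta,\partial^j b$ and $\partial^j\gamma,\partial^j c$. Writing $\cB$ for the subalgebra generated by the candidate fields, one has $\text{gr}\,\cB\subseteq\text{gr}(\cS^{\gs\gl_2[t]})\subseteq(\text{gr}\,\cS)^{\gs\gl_2[t]}$, and symmetrically for $\cE$ and $\cW$, so it suffices to show the symbols of the candidates already generate the classical invariant ring $(\text{gr}\,\cS)^{\gs\gl_2[t]}$. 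The decisive structural point is that the induced $\gs\gl_2[t]$-action on $\text{gr}$ is the arc (jet) prolongation of the $SL_2$-action on $V\oplus V^*$, so that $(\text{gr}\,\cS)^{\gs\gl_2[t]}$ is the ring of functions on the arc space of the affine quotient $(V\oplus V^*)/\!/SL_2$.

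The three cases then diverge according to whether the invariants are symmetric or exterior. In the symmetric case the only $SL_2$-invariant of $V\oplus V^*=\mathbb{C}^2\oplus(\mathbb{C}^2)^*$ is the contraction $\langle\gamma,\beta\rangle$: the determinant invariant in $\bigwedge^2 V$ cannot be built from the single bosonic field $\beta$, since $:\beta^1\beta^2:=:\beta^2\beta^1:$. Hence $(V\oplus V^*)/\!/SL_2\cong\mathbb{A}^1$, its arc space is a polynomial ring on $\partial^k\langle\gamma,\beta\rangle$, and $\cS^{\gs\gl_2[t]}$ is exactly the rank one Heisenberg algebra generated by $H$; this also explains why, unlike the adjoint case of Section 4, no $v^x,v^y$ appear. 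In the exterior case the two \emph{fermionic} determinants $:b^1b^2:$ and $:c^1c^2:$ survive precisely because $b^1,b^2$ anticommute, so they are nonzero with no derivative required; together with the contraction they give three generators, and a short null-vector computation (from $:b^ib^i:=0$ one gets $:E^+E^+:=0$, the level-one singular vector) identifies the image as the simple quotient $L_1(\gs\gl_2)$ rather than $V_1(\gs\gl_2)$. Part (3) combines the two: Weyl's first fundamental theorem for $SL_2$ acting on the two vectors $\beta,b$ and two covectors $\gamma,c$ produces the four contractions $H,F,Q_1^\pm$ and the four super-determinants $E^\pm,Q_2^\pm$, matching \eqref{sl21} exactly.

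The main obstacle is this generation step, and it has two genuinely nontrivial ingredients. The first is the arc-space identification $\mathbb{C}[J_\infty X]^{J_\infty SL_2}=J_\infty(\mathbb{C}[X]^{SL_2})$ for $X=V\oplus V^*$, which is not formal; one must either verify it by hand for these small representations or invoke a flatness criterion for the quotient map. It is exactly this rigidity that eliminates the naive symplectic pairings $\langle\partial^i\beta,\partial^j\beta\rangle$, which are $\gs\gl_2$-invariant but \emph{not} $\gs\gl_2[t]$-invariant---a direct check shows the second-order pole of $\Theta^\xi_\cS(z)\big(:\beta^1\partial\beta^2:-:\beta^2\partial\beta^1:\big)(w)$ does not vanish. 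The second ingredient is converting the equality $\text{gr}\,\cB=(\text{gr}\,\cS)^{\gs\gl_2[t]}$ of classical invariant rings into the equality $\cB=\cS^{\gs\gl_2[t]}$ of vertex algebras, by the standard reconstruction argument on filtration degree, which I would corroborate with a graded-character comparison. In part (3) one must additionally compute the kernel of $V_1(\gs\gl(2|1))\to\cW^{\gs\gl_2[t]}$, which is why the statement asserts only a homomorphic image.
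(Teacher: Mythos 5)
Your proposal is correct and takes essentially the same route as the paper, which offers no independent proof of this theorem but defers it to \cite{LSS} (``Arc spaces and the vertex algebra commutant problem''): exhibit the Weyl FFT contractions and (super)determinants, pass to the associated graded under the good filtration, identify the classical invariant ring with functions on the arc space of the affine quotient, and reconstruct the vertex algebra equality from the equality of associated graded algebras. One small caveat: invariance of the quadratic generators is not solely a zero-mode condition as you assert---the second-order pole $\Theta^{\xi}\circ_1 m$ must also vanish---but it does so here because it is proportional to $\text{tr}_V \rho(\xi)=0$ for the contraction-type generators and is absent for the determinant-type generators by the contraction pattern, so your conclusion stands.
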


We now give an alternative realization of $V_1(\gs\gl(2|1))$. For this, we need the Friedan-Martinec-Shenker bosonization of bosons \cite{FMS}. 
This is an embedding of the rank $1$ $\beta\gamma$-system with generators $\beta,\gamma$ into the vertex algebra $V_L \otimes \cF$, 
where $L$ is the lattice $ \mathbb{Z}$ and $\cF$ is the symplectic fermion vertex algebra with generators $\chi^{\pm}$ and operator product relations $\chi^+(z) \chi^-(w) \sim (z-w)^{-2}$. Let $\psi$ be a boson with operator product expansion
$$\psi(z)\psi(w) \sim - \ln(z-w),$$ and let $\eta= 1$ be the generator of $L$, so that $V_L$ has generators $\Gamma_{\pm \eta}$. The $\beta\gamma$-system embeds into $V_L \otimes \cF$ via the map $$\beta\mapsto \ : \Gamma_{\eta} \chi^+:,\qquad \gamma \mapsto \ :\Gamma_{-\eta} \chi^-:.$$

Now we bosonize the entire $bc\beta\gamma$-system $\cW$. For $j=1,2$, denote by $\phi_{j},\psi_j$ bosons with operator product expansions
$$\phi_j (z)\phi_j (w) \sim \ln(z-w),\qquad \psi_j (z)\psi_j(w) \sim -\ln(z-w),$$ respectively. Let $L$ be the rank $4$ lattice $\mathbb{Z}^4$ with generators 
\begin{equation*} \xi_1 = (1,0,0,0),\qquad \xi_2 = (0,1,0,0),\qquad \eta_1 = (0,0,1,0),\qquad \eta_2 = (0,0,0,1),\end{equation*}
and quadratic form
\begin{equation*}
 \xi_j\xi_j=1,\qquad \eta_j\eta_j=-1,
\end{equation*}
so that the lattice has signature $(2,2)$.
For $j=1,2$ let $\Gamma_{\pm \xi_j}, \Gamma_{\pm \eta_j}$ be the corresponding generators for $V_L$. Let $\cF$ be the rank $2$ symplectic fermion algebra with generators $\chi^{\pm}_j$ and operator products $\chi^+_j(z) \chi^-_j(w) \sim (z-w)^{-2}$. We have an embedding $\cW \ra V_L \otimes \cF$ given by

\begin{equation}\label{bos}
\begin{split}
b^j\mapsto \Gamma_{\xi_j} ,\qquad c^j \mapsto \Gamma_{-\xi_j},\qquad :b^jc^j:\ \mapsto \partial \phi_j \\
\beta^j \mapsto\ : \Gamma_{\eta_j} \chi^+_j:,\qquad \gamma^j \mapsto\ :\Gamma_{-\eta_j} \chi^-_j:,\qquad :\beta^j\gamma^j: \ \mapsto  \partial \psi_j.
\end{split}
\end{equation}

Define $\phi_\pm=\phi_1\pm \phi_2$ and $\psi_\pm=\psi_1\pm \psi_2$. We have the corresponding elements $\xi_{\pm} = \xi_1\pm \xi_2$ and $\eta_{\pm} = \eta_1\pm \eta_2$ of the lattice $L$. Define \begin{equation}\label{fer}
\begin{split}
\sqrt{2}\chi^+&= \ :\Gamma_{\frac{1}{2}(\zeta_- -\eta_-)} \chi^+_2:  + :\Gamma_{\frac{1}{2}(-\zeta_- +\eta_-)} \chi^+_1:,\\
\sqrt{2}\chi^-&= \ :\Gamma_{\frac{1}{2}(-\zeta_-+\eta_-)} \chi^-_2: + \ :\Gamma_{\frac{1}{2}(\zeta_- -\eta_-)} \chi^-_1:.
\end{split}
\end{equation}

The operators products of $\chi^\pm, \phi_+$ and $\psi_+$ are
\begin{equation} \label{newop}
 \phi_+(z)\phi_+(w) \sim 2\ln(z-w),\quad
 \psi_+(z)\psi_+(w) \sim-2\ln(z-w),\quad
\chi^+(z)\chi^-(w) \sim (z-w)^{-2}. 
\end{equation}

\begin{lemma}
The generators of $\cW^{\gs\gl_2[t]}$ can be expressed in the form
\begin{equation*}\begin{split}
H &=  \partial \psi_+, \\
F&= - \partial \phi_+,\quad
E^+= \Gamma_{\xi_+},\quad
E^-= \Gamma_{-\xi_+},\\
Q_1^-&= \sqrt{2} :\Gamma_{\frac{1}{2}(-\xi_+ +\eta_+)}\chi^+:,\quad
Q_1^+= \sqrt{2}:\Gamma_{\frac{1}{2}(\xi_+-\eta_+)}\chi^-:,\\
Q_2^+&= \sqrt{2}:\Gamma_{\frac{1}{2}(\xi_+ +\eta_+)}\chi^+:,\quad
Q_2^-=\sqrt{2}: \Gamma_{\frac{1}{2}(-\xi_+-\eta_+)}\chi^-:.
\end{split}\end{equation*} 
\end{lemma}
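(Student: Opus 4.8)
The plan is to verify the claimed identities by substituting the Friedan--Martinec--Shenker bosonization \eqref{bos} directly into the generators \eqref{sl21} and simplifying, using the redefinitions $\phi_\pm = \phi_1\pm\phi_2$, $\psi_\pm = \psi_1\pm\psi_2$, $\xi_\pm = \xi_1\pm\xi_2$, $\eta_\pm = \eta_1\pm\eta_2$, and the rotated symplectic fermions \eqref{fer}. The two Cartan generators are immediate: since $:\beta^j\gamma^j:\ \mapsto \partial\psi_j$ and $:b^jc^j:\ \mapsto\partial\phi_j$, we get $H = :\beta^1\gamma^1: + :\beta^2\gamma^2: \ \mapsto \partial\psi_1 + \partial\psi_2 = \partial\psi_+$ and $F = -(:b^1c^1:+:b^2c^2:)\mapsto -\partial\phi_+$. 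For $E^\pm$ I would use that $\xi_1$ and $\xi_2$ are orthogonal in $L$, so that the operator product $\Gamma_{\xi_1}(z)\Gamma_{\xi_2}(w)$ is regular and $:b^1b^2:\ \mapsto\ :\Gamma_{\xi_1}\Gamma_{\xi_2}:\ = \Gamma_{\xi_1+\xi_2} = \Gamma_{\xi_+}$; likewise $E^-\mapsto \Gamma_{-\xi_+}$.

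For the four odd generators the cleanest route is to expand the proposed right-hand sides back into the $j=1,2$ variables using \eqref{fer}, rather than to guess the rotation. Writing $\sqrt2\,\chi^+ = \ :\Gamma_{\frac12(\xi_- -\eta_-)}\chi_2^+:\ +\ :\Gamma_{\frac12(-\xi_- +\eta_-)}\chi_1^+:$, one forms the normally ordered product with $\Gamma_{\frac12(\pm\xi_+ +\eta_+)}$. Each resulting term is a product of two lattice exponentials, whose labels I would check to be mutually orthogonal (so the product is regular and equals $\pm\Gamma$ at the summed label), together with a symplectic fermion that commutes with the lattice factor since it lives in the other tensor factor. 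The arithmetic of the labels collapses neatly: for instance $\tfrac12(-\xi_+ +\eta_+) + \tfrac12(\xi_- -\eta_-) = \eta_2-\xi_2$ and $\tfrac12(-\xi_+ +\eta_+) + \tfrac12(-\xi_- +\eta_-) = \eta_1-\xi_1$, so that $\sqrt2\,:\Gamma_{\frac12(-\xi_+ +\eta_+)}\chi^+:$ reproduces $:\Gamma_{\eta_1-\xi_1}\chi_1^+:\ +\ :\Gamma_{\eta_2-\xi_2}\chi_2^+:$, which is exactly the image of $Q_1^- = :\beta^1c^1:+:\beta^2c^2:$ under \eqref{bos}. The remaining identities for $Q_1^+, Q_2^+, Q_2^-$ follow from the same computation with the signs of $\xi_-$ and $\eta_-$ flipped as dictated by \eqref{fer}.

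The main obstacle is the bookkeeping of the cocycle signs in $V_L$. The Wick product of lattice exponentials satisfies $:\Gamma_\alpha\Gamma_\beta:\ = \epsilon(\alpha,\beta)\Gamma_{\alpha+\beta}$ for a chosen $2$-cocycle $\epsilon$, and while for the symmetric combinations $Q_1^\pm$ the two contributions carry the same sign, for the antisymmetric combinations $Q_2^+ = :b^1\beta^2: - :b^2\beta^1:$ and $Q_2^- = :\gamma^1c^2: - :\gamma^2c^1:$ the explicit minus sign must be matched against the difference between $\epsilon(\xi_1,\eta_2)$ and $\epsilon(\xi_2,\eta_1)$ coming from the two orderings. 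The delicate point is therefore to fix a single consistent cocycle (equivalently, a consistent ordering of the generators of $L$) for which all four odd identities hold simultaneously with the stated $\sqrt2$ normalizations; once such a choice is seen to be compatible with the fermion operator products \eqref{newop}, the lemma follows from the collapse of lattice labels described above. I expect the orthogonality and regularity checks to be routine, and the reconciliation of signs and normalizations to be the only genuinely subtle step.
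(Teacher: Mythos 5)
Your proposal is correct and is essentially the paper's own argument: the paper's entire proof is the single sentence ``This follows by inserting \eqref{bos} and \eqref{fer} into \eqref{sl21},'' i.e.\ exactly the direct substitution and collapse of lattice labels that you carry out. Your label arithmetic (e.g.\ $\tfrac12(-\xi_++\eta_+)+\tfrac12(\pm(\xi_--\eta_-))$ giving $\eta_2-\xi_2$ and $\eta_1-\xi_1$) is right, and your flagging of the cocycle-sign bookkeeping for the antisymmetric generators $Q_2^{\pm}$ is a genuine subtlety that the paper silently suppresses rather than a defect of your approach.
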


\begin{proof}
This follows by inserting \eqref{bos} and \eqref{fer} into \eqref{sl21}.
\end{proof}

\begin{thm}
$\cC(\gs\gl_2,\mathbb{C}^2)\cong L_1(\gs\gl_2)\otimes \cW_{3,-2}$, where $\cW_{3,-2}$ denotes the Zamolodchikov $\cW_3$-algebra with central charge $c=-2$.
\end{thm}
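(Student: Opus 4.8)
The plan is to compute $\cC(\gs\gl_2,\mathbb{C}^2)=\text{Com}(\cS^{\gs\gl_2[t]},\cW^{\gs\gl_2[t]})$ directly from the bosonized description just established. Since $\cS^{\gs\gl_2[t]}$ is the rank-one Heisenberg algebra generated by $H=\partial\psi_+$, the commutant consists of the fields in $\cW^{\gs\gl_2[t]}$ of $H$-charge zero that moreover contain no factor of $\partial\psi_+$. First I would split off the $L_1(\gs\gl_2)$ factor. The fields $F=-:b^1c^1:-:b^2c^2:$, $E^+=:b^1b^2:$, $E^-=:c^1c^2:$ generate $\cE^{\gs\gl_2[t]}\cong L_1(\gs\gl_2)$, and they lie in the $bc$-system $\cE$, which commutes with $H\in\cS$; in the bosonization they become $-\partial\phi_+$ and $\Gamma_{\pm\xi_+}$, living purely in the $\phi_+,\xi_+$ sector. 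Hence $L_1(\gs\gl_2)\subseteq\cC(\gs\gl_2,\mathbb{C}^2)$.

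Next I would extract the second factor from the symplectic-fermion sector. The fields $\chi^\pm$ of \eqref{fer} satisfy $\chi^+(z)\chi^-(w)\sim(z-w)^{-2}$ and, being built only from the ``minus'' boson directions (which pair trivially with $\eta_+$) and the original fermions, they commute with $H=\partial\psi_+$; since $\xi_+$ is orthogonal to those directions they also commute with $F$ and $E^\pm$. Thus the $\chi^\pm$ sector commutes with the $L_1(\gs\gl_2)$ sector. The $\chi^\pm$ are not themselves in $\cW^{\gs\gl_2[t]}$, but the $H$-neutral normally ordered quadratics in the $Q_i^{\pm}$ are, and upon projecting to zero lattice momentum they produce the neutral bilinears $:\partial^a\chi^+\partial^b\chi^-:$. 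Let $\cK$ denote the subalgebra of the rank-one symplectic fermion algebra generated by these. This is exactly the Friedan--Martinec--Shenker picture of the commutant of a Heisenberg current inside a rank-one $\beta\gamma$-system, so by Wang's theorem \cite{Wa} one has $\cK\cong\cW_{3,-2}$; concretely one exhibits the weight-two Virasoro field of central charge $-2$ and the weight-three primary inside $\cK$ and verifies the defining $\cW_3$ operator products.

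It then remains to show that $L_1(\gs\gl_2)$ and $\cK$ together generate all of $\cC(\gs\gl_2,\mathbb{C}^2)$ and that the product is direct. For directness, the two subalgebras commute and occupy complementary sectors (the $\phi_+,\xi_+$ lattice versus the $\chi^\pm$ fermions), so they span a copy of $L_1(\gs\gl_2)\otimes\cW_{3,-2}$. For exhaustiveness, I would grade $\cW^{\gs\gl_2[t]}$ by $\xi_+$-momentum and $H$-charge: every $H$-neutral field factors, sector by sector, as a product of its $\phi_+,\xi_+$-part (an element of $L_1(\gs\gl_2)$) and its $\chi^\pm$-part (an element of $\cK$), yielding the reverse inclusion $\cC(\gs\gl_2,\mathbb{C}^2)\subseteq L_1(\gs\gl_2)\otimes\cK$.

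The cleanest way to seal the generation step---and the part I expect to be the main obstacle---is a graded-character comparison, exactly parallel to the adjoint case. Using the bosonized triple-graded character of $\cE\otimes\cS$ and extracting the $H$-neutral part (equivalently, dividing the charge-zero sector by the vacuum Heisenberg character), I would check that $\text{ch}[\cC(\gs\gl_2,\mathbb{C}^2)]$ equals $\text{ch}[L_1(\gs\gl_2)]\cdot\text{ch}[\cW_{3,-2}]$; the central charges already agree, as $c=-1=1+(-2)$. The delicate point is making the identification $\cK\cong\cW_{3,-2}$ precise---matching the dressed bilinears built from the $Q_i^{\pm}$ to Wang's $\beta\gamma$-commutant generators once the $H$-corrections are removed, and confirming these bilinears generate all of $\cW_{3,-2}$ rather than a proper subalgebra. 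The character identity is what guarantees that nothing beyond $L_1(\gs\gl_2)\otimes\cW_{3,-2}$ survives.
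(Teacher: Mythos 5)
Your overall skeleton matches the paper's: bosonize, peel off $\cE^{\gs\gl_2[t]}\cong L_1(\gs\gl_2)$ living in the $\phi_+,\xi_+$ sector, and identify the remaining piece with the fermion-number-zero subalgebra of a rank-one symplectic fermion algebra, which is $\cW_{3,-2}$ (the paper verifies this last identification by exhibiting $L=\,:\chi^+\chi^-:$ and the weight-three primary $W$ and checking the $\cW_3$ operator products directly, rather than citing Wang; either is acceptable). The genuine gap is in your exhaustiveness step. The character comparison you propose does not compute the right object: extracting the $H$-neutral part of $\mathrm{ch}[\cE\otimes\cS]$ and dividing by the Heisenberg character gives (at best) the character of $\mathrm{Com}(\cH,\cW)$, whereas what is needed is the character of $\mathrm{Com}(\cS^{\gs\gl_2[t]},\cW^{\gs\gl_2[t]})$, and this requires first knowing $\mathrm{ch}[\cW^{\gs\gl_2[t]}]$. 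That character is not available: $\cW^{\gs\gl_2[t]}$ is only known to be a homomorphic image of $V_1(\gs\gl(2|1))$ with a priori unknown kernel, $\gs\gl_2[t]$-invariance is strictly stronger than $SL_2$-invariance (so no naive Weyl-type extraction applies), and unlike the adjoint case there is no simplicity/deformation lemma in play here to pin the character down. So ``the central charges agree'' plus a character identity you cannot actually establish does not close the argument.

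Your alternative, non-character route (``every $H$-neutral field factors sector by sector, with fermionic part in $\cK$'') is essentially the paper's argument, but the load-bearing claim --- that the fermionic part of anything in the commutant lies in the subalgebra generated by the dressed bilinears --- is exactly what must be proved, not asserted. The paper does this in two concrete steps you are missing: (i) a grading argument comparing the $H(0)$-eigenvalue with the fermion-number grading of $V_{\mathrm{SF}}$ (they differ by a factor of $-2$), which forces $\cW^{\gs\gl_2[t]}\cap V_{\mathrm{SF}}$ into the fermion-number-zero part $V_0$; and (ii) an induction on an ordering of the monomials $:\partial^a\chi^+\partial^b\chi^-:$, using the identity $:\partial^aQ_1^-\,\partial^bQ_1^+:\ =-2:\partial^a\chi^+\partial^b\chi^-:+\,V(\chi^\pm,F+H)$ with $F+H\in\cW^{\gs\gl_2[t]}$, to show that \emph{every} such bilinear --- and hence all of $V_0$, which they span under normally ordered products --- actually lies in $\cW^{\gs\gl_2[t]}$. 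You gesture at ``removing the $H$-corrections'' but do not supply the induction that makes this legitimate inside $\cW^{\gs\gl_2[t]}$; without it, neither inclusion between $\cW^{\gs\gl_2[t]}\cap V_{\mathrm{SF}}$ and $V_0$ is established.
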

\begin{proof}

We denote the lattice vertex algebra whose generators are $\Gamma_{\pm\frac{1}{2}\eta_+}$ by $V_{\psi}$. Let
$V_{\phi}$ be the lattice vertex algebra generated by $\Gamma_{\pm\frac{1}{2}\xi_+}$, and call $V_{\text{SF}}$ the vertex algebra generated by $\chi^\pm$, which is a symplectic fermion algebra by \eqref{newop}. Introduce a $\mathbb Z$-grading $V_{\text{SF}}=\oplus V_n$ by saying that an element of the form $:\partial^{a_1}\chi^+ \cdots \partial^{a_r}\chi^+\partial^{b_1}\chi^-\cdots \partial^{b_s}\chi^-$ has grade $r-s$.
Clearly $\cW^{\gs\gl_2[t]}$ is a subalgebra of $V_{\psi}\otimes V_{\phi}\otimes V_{\text{SF}}$, and $\text{Com}(\cS^{\gs\gl_2[t]},V_{\psi}\otimes V_{\phi}\otimes V_{\text{SF}})=V_{\phi}\otimes V_{\text{SF}}$. Hence
$$\text{Com}(\cS^{\gs\gl_2[t]},\cW^{\gs\gl_2[t]})= \cW^{\gs\gl_2[t]}\cap  (V_{\phi}\otimes V_{\text{SF}})   =     L_1(\gs\gl_2)\otimes ( \cW^{\gs\gl_2[t]}\cap  V_{\text{SF}}).$$
$\cW^{\gs\gl_2[t]}$ is graded by both $H(0)$ and the $\mathbb Z$-grading of symplectic fermions. The two gradings differ by a factor of $-2$, hence 
$\cW^{\gs\gl_2[t]}\cap  V_{\text{SF}}\subset V_0$.
We will show that all fields of the form $:\partial^a \chi^+\partial^b \chi^-:$ are in $\cW^{\gs\gl_2[t]}$, which implies that \begin{equation} \label{intermediate} \cC(\gs\gl_2,\mathbb{C}^2)\cong L_1(\gs\gl_2)\otimes V_0.\end{equation}
For this, we introduce an ordering as follows $:\partial^a \chi^+\partial^b \chi^-:\  >\ :\partial^{a'}\chi^+\partial^{b'}\chi^-:$
if $a+b>a'+b'$ or if $a+b=a'+b'$ and $a>a'$. 
We prove the claim by induction on this ordering.
Certainly $1\in V_0\cap\cW^{\gs\gl_2[t]}$.
We compute
\begin{equation*}
:\partial^a Q_1^- \partial^b Q^+_1:\ = \ - 2 :\partial^a \chi^+\partial^b \chi^-: + V(\chi^\pm,F+H)
\end{equation*}
where  $V(\chi^\pm,F+H)$ is a field expressed in terms of $F+H$ and its derivatives as well as fields of lower degree in $V_0$. By the induction hypothesis, $V(\chi^\pm,F+H)$ in $\cW^{\gs\gl_2[t]}$, and hence the same is true for all fields of the form $:\partial^a \chi^+\partial^b \chi^-:$. Furthermore, every field of $V_0$ can be written as a normally ordered polynomial in such fields. This completes the proof of \eqref{intermediate}.

Finally, note that $V_0$ is generated as a vertex algebra by the fields $$L =\ :\chi^+ \chi^-:,\ \ \ \ \ \ \ \ W=  \frac{1}{\sqrt{6}} :(\partial\chi^+) \chi^-: -  \frac{1}{\sqrt{6}} :\chi^+ \partial \chi^-:$$ of weights $2$ and $3$. It is not difficult to check that $W$ is primary of weight $3$ with respect to $L$, and satisfies the operator product
$$W(z)W(w)\sim -\frac{2}{3} (z-w)^{-6} + 2L(w) (z-w)^{-4} + \partial L(w)(z-w)^{-3}$$ $$+ (\frac{8}{3} :L(w)L(w): - \frac{1}{2} \partial^2 L(w))(z-w)^{-2} $$ $$+(\frac{4}{3} \partial(:L(w)L(w):) - \frac{1}{3} \partial^3 L(w))(z-w)^{-1}.$$ This shows that $V_0$ coincides with the Zamolodchikov $\cW_3$-algebra with $c=-2$ \cite{Za}. \end{proof}

\begin{remark}
A family of quasi-rational vertex algebras \cite{AMI,AMII} are the $\cW(p,q)$ triplet theories for positive co-prime integers $p<q$, 
which were first introduced in physics in \cite{Kau}. 
These have central charge $c=1-6(p-q)^2/(pq)$, and the algebra is strongly generated by the Virasoro field and three fields of
conformal dimension $(2p-1)(2q-1)$. The algebra contains a singlet subalgebra with only one generator in addition to the Virasoro field.
The simplest case is $\cW(1,2)$, which is known to be a subalgebra of symplectic fermions \cite{GKau}. 
$V_0$ also coincides with the singlet subalgebra of the $\cW(1,2)$ triplet vertex algebra.
\end{remark}
\begin{remark}
We have realized the singlet algebra at $c=-2$ as a commutant involving $V_1(\gs\gl(2|1))$ and also $V_{-1/2}(\gs\gl_2)$.
There are related observations, namely there are constructions \cite{R,CR} relating the $\cW(1,2)$ singlet and triplet algebra to $V(\gg\gl(1|1))$ and $V_{-1/2}(\gs\gl_2)$
as well as the $\cW(1,3)$ singlet algebra to $V_{-4/3}(\gs\gl_2)$ \cite{A}. Moreover $V_1(\gs\gl(2|1))$ is a simple current extension
of $V(\gg\gl(1|1))$ \cite{CR}.  
\end{remark}

\end{document}